\newtheorem{theorem}{Theorem}[section]
\newtheorem{cor}[theorem]{Corollary}
\newtheorem{lem}[theorem]{Lemma}
\newtheorem{prop}[theorem]{Proposition}
\newtheorem{example}[theorem]{Example}
\newtheorem{defi}[theorem]{Definition}
\newtheorem{rem}[theorem]{Remark}
\numberwithin{equation}{section}
\DeclareMathOperator{\Hom}{Hom}
\DeclareMathOperator{\Fib}{Fib}
\DeclareMathOperator{\MC}{MC}
\DeclareMathOperator{\Haut}{HAut}
\DeclareMathOperator{\Aut}{Aut}
\DeclareMathOperator{\Baut}{BAut}
\DeclareMathOperator{\Def}{Def}
\DeclareMathOperator{\defo}{def}
\DeclareMathOperator{\Ker}{Ker}
\newcommand{\noproof}{\begin{flushright} \ensuremath{\square}
\end{flushright}}
\def\ground{\mathbf k}
\def\C{\mathscr C}
\def\L{\mathscr L}
\def\f{\mathfrak f}
\def\g{\mathfrak g}
\def\h{\mathfrak h}
\def\MCmod{\mathscr {MC}}
\def\Ext{\operatorname{Ext}}
\def\Z{\mathbb{Z}}
\def\id{\operatorname{id}}
\def\Der{\operatorname{Der}}
\def\CE{\operatorname{CE}}
\DeclareMathOperator{\Harr}{Harr}
\DeclareMathOperator{\Hoch}{Hoch}
\begin{document}

\title[Models for classifying spaces]{Models for classifying spaces and derived deformation theory.}
\author{A.~Lazarev}
\thanks{The author is grateful to J. Chuang, M. Markl and J. Stasheff for many useful discussions concerning the subject of this paper.}
\thanks{This research was partially supported by the EPSRC grant
  EP/J008451/1.}
\address{University of Lancaster\\ Department of
Mathematics and Statistics\\Lancaster LA1 4YF, UK.}
\email{a.lazarev@lancaster.ac.uk} \keywords{Classifying space, formal deformation, differential graded
algebra, Chevalley-Eilenberg cohomology, Maurer-Cartan element, Sullivan model} \subjclass[2010]{55P62, 16E45, 13D10}
\begin{abstract}
Using the theory of extensions of $L_\infty$ algebras, we construct rational homotopy models for classifying spaces of fibrations, giving answers in terms of classical homological functors, namely the Chevalley-Eilenberg and Harrison cohomology. We also investigate the algebraic structure
of the Chevalley-Eilenberg complexes of $L_\infty$ algebras and show that they possess, along with the Gerstenhaber bracket, an $L_\infty$ structure that is homotopy abelian.
\end{abstract}

\maketitle
\tableofcontents
\section{Introduction}
The problem of classifying fibrations with a given fiber $F$ up to fiber homotopy equivalence was solved by Stasheff, \cite{Sta} (cf. also \cite{All, May})  who proved that
the corresponding functor is represented by the Dold-Lashof classifying space $\Baut(F)$ of the monoid of self-homotopy equivalences of $F$; the homotopy type of the space $\Baut(F)$ depends on the homotopy type of $F$ only. On the other hand we know that the category of rational nilpotent spaces admits a completely algebraic description in terms of either commutative differential graded algebras or differential graded Lie algebras \cite{Q',Sul,BG,N}. It is, therefore, natural to ask for a purely algebraic construction of the space $\Baut(F)$ from a rational homotopy model of $F$. This problem was first addressed in \cite{Sul} and then given more detailed treatment in \cite{SS, Tanre}, in the case of a simply-connected $F$. The answers given in the mentioned references appeared somewhat ad hoc, in that they seemingly depended on the model for $F$ being used and were not expressed in terms of standard derived functors. Note also, that the space $\Baut(F)$ does not completely fall into the realm of rational homotopy since its fundamental group, which is the group of homotopy classes of self-homotopy equivalences of $F$,  is typically not nilpotent. However, it is often the case (for a rational space $F$) that this group is the group of $\mathbb Q$-points of an affine algebraic group and the corresponding Lie algebra was explicitly identified in \cite{BL} as the first Harrison cohomology of the Sullivan model of $F$.

One of the purposes of the present paper is to complete the picture by producing a Lie-Quillen model for the universal cover of the space $\Baut(F)$ together with the action of the fundamental group (or the tangent action of the corresponding Lie algebra) solely in terms of standard derived functors i.e. Chevalley-Eilenberg or Harrison cohomology.

The main tool for us is the notion of an $L_\infty$ extension, first introduced in \cite{CL'}. It is a far reaching generalization of an extension of Lie algebras. It turns out that $L_\infty$ extensions can be classified in terms of Chevalley-Eilenberg cohomology; the corresponding classical result
can be recovered as a (very) special case. Namely, we prove that the functor associating to an $L_\infty$ algebra $V$ the set of equivalence classes of extensions of $V$ with kernel $I$ is represented in the homotopy category of $L_\infty$ algebras by $\Sigma C_{\CE}(I,I)$, the suspended Chevalley-Eilenberg complex of $I$ supplied with the Gerstenhaber bracket.
The topological counterpart of an $L_\infty$ extension is a fibration and taking $I$ to be the $L_\infty$ algebra corresponding to the Sullivan model of $F$ we obtain an explicit model for $\Baut(F)$.

One interesting feature of the notion of an $L_\infty$ extension is that, from a different point of view, this is nothing but a \emph{deformation} with a differential graded base.
This extended, or derived, deformation theory has been the subject of much recent work, cf. \cite{Man} and references therein.
The deformation-theoretic interpretation fits very naturally with the topological notion of a fibration as a family of spaces parametrized by a base. It has to be said that this idea (as well as the realization that rational homotopy theory and deformation theory are, roughly, one and the same thing) belongs to Schlessinger and Stasheff and the present paper obviously owes an intellectual debt to their manuscript, which has been circulated since early 1980's and is now available on arXiv, \cite{SS}. A version of our classification result can be formulated as saying that the deformation functor of an $L_\infty$ algebra $V$ is represented by the truncated Chevalley-Eilenberg complex of $V$. Here our deformation functor really needs to be understood in the derived sense since otherwise it is hardly ever representable. We note that a similar result also holds for other types of homotopy algebras (i.e. $C_\infty$ or $A_\infty$), however the interpretation in terms of extensions is special to $L_\infty$ algebras.

Another application of deformation theory is concerned with the algebraic structure of the Chevalley-Eilenberg complex $C_{\CE}(V,V)$ of an $L_\infty$ algebra $V$. Recall from \cite{Laz} that if $X$ and $Y$ are two rational spaces,  $L(X), L(Y)$ are their Lie-Quillen models and $f:X\to Y$ is a map then, roughly speaking, the Chevalley-Eilenberg complex $C_{\CE}(L(X),L(Y))$ is a Lie-Quillen model for the function space $F(X,Y)$ based at $f$ (more precisely, it is $C_{\CE}(L(X),L(Y))\langle 0\rangle$, the connected cover of the Chevalley-Eilenberg complex). Here the graded Lie
algebra structure on $C_{\CE}(L(X),L(Y))$ comes from the argument $L(Y)$.

Let $X=Y$; then the complex $C_{\CE}(L(X),L(X))$ has two graded Lie algebra structures; one of them is odd,
the other even. The odd Lie algebra corresponds to a Lie-Quillen model for $\Baut(X)$ and the even one -- to a Lie-Quillen model for $\Aut(X)$.  This behavior persists in the abstract situation: the Chevalley-Eilenberg complex $C_{\CE}(V,V)$ of an arbitrary $L_\infty$
algebra $V$ (in particular, of an ordinary Lie algebra) has two graded Lie algebra structures. Note that this phenomenon is more familiar in the context of Hochschild cohomology of an associative (or $A_\infty$) algebra which too, has two structures which combine into a Gerstenhaber algebra structure.

Invoking again the analogy with topology, we expect that one of the Lie algebra structures on $C_{\CE}(V,V)$ (namely, the one coming from the second argument) should be trivial since it corresponds to the Whitehead Lie algebra of a topological monoid. We prove that this is indeed, the case. In fact, we prove that for an arbitrary $L_\infty$ algebra $V$ (not necessarily related to any rational space) the corresponding dgla $C_{\CE}(V,V)$ is \emph{homotopy abelian}, (in the strong sense, i.e. it is $L_\infty$ quasi-isomorphic to an abelian $L_\infty$ algebra). Note that a similar result also holds in the associative context; it could be viewed as a weak version of the Deligne conjecture. To be sure, this version is much easier to prove than the real Deligne conjecture.

The organization of the paper is as follows. In the rest of the introduction we describe our conventions and establish the basic setup in which subsequent work is done. Section 2 describes Chevalley-Eilenberg and Harrison cohomology with coefficients in adjoint representations; this material is not quite standard and the reader may consult \cite{HL} for background and details. The material from this section closely mirrors Section 7 of \cite{Laz} but there is an important difference. The Chevalley-Eilenberg complex of a Lie (or $L_\infty$) algebra with coefficients in itself possesses a Gerstenhaber bracket, and to see that, this complex  is best described in terms of derivations of its representing differential graded algebra; this is the approach adopted here. By contrast, if the coefficients are taken in another Lie (or $L_\infty$) algebra then there exists \emph{another} Lie (or $L_\infty$) structure on the Chevalley-Eilenberg complex, and to see it, one has to use another definition via Maurer-Cartan twisting. It would be interesting to find out the appropriate compatibility relation between such a pair of Lie brackets together with the operad governing them.

Sections 3 and 4 study $L_\infty$ extensions and their variants and links with deformation theory. A classification theorem is established, which could also be viewed as the existence of universal deformations for $L_\infty$ algebras. Here, universality is referred to the corresponding functor being representable in the \emph{homotopy category}, rather than on the nose. Section 5 contains an application of the developed theory to constructing rational homotopy models of classifying spaces. In Section 5 we return to the general algebraic setup and prove that the second $L_\infty$ structure in the Chevalley-Eilenberg complex of $L_\infty$ algebras with coefficients in themselves is homotopy abelian. Finally, in Section 6 we list some open problems naturally arising from our results.

\subsection{Notation and conventions} We make extensive use of the previous work by the author \cite{Laz} and our notation is mostly chosen to coincide with that in op.cit.  We work in the category of $\Z$-graded vector spaces over a field $\ground$
of characteristic zero; any explicit mention of  $\ground$ is usually omitted. When considering models for topological spaces, the field $\ground$ is understood to be $\mathbb Q$. Differential graded algebras will have cohomological grading with upper indices and differential graded Lie algebras will have homological grading with lower indices, unless indicated otherwise. There are cases when the same object has to be considered as homologically graded in one situation and cohomologically graded in another; for this we adopt the standard convention for passing between upper and lower indices: $V_i=V^{-i}$.   The \emph{suspension}  $\Sigma V$ of a homologically graded vector space $V$ is defined by the convention $\Sigma V_i=V_{i-1}$; for a cohomologically graded space the grading convention is as follows: $\Sigma V^i= V^{i+1}$. The functor of taking the linear dual takes homologically graded vector spaces into cohomologically graded ones so that $(V^*)^i=(V_{i})^*$; further we will write $\Sigma V^*$ for $\Sigma(V^*)$; with this convention there is an isomorphism $(\Sigma V)^*\cong\Sigma^{-1}V^*$.

The adjective `differential graded' will be abbreviated as `dg'. A
(commutative) differential graded (Lie) algebra will be abbreviated
as (c)dg(l)a. A Maurer-Cartan (MC) element in a dgla $\g$ is an element $\xi\in\g_{-1}$ which satisfies the MC equation $d(\xi)+\frac{1}{2}[\xi,\xi]=0$. The set of all MC elements in $\g$ is denoted by $\MC(\g)$ and the set of homotopy classes of MC elements in $\g$ is denoted by $\MCmod(\g)$. We will denote by $\g^\xi$ the graded Lie algebra $\g$ supplied with the differential twisted by the MC element $\xi:d^\xi:=d+[\xi,?]$.

A \emph{complete}\footnote{In earlier papers, e.g. \cite{Laz, HL} such dg spaces, as well as accompanying algebraic structures, are called \emph{formal}, which was meant to elicit the analogy with formal power series. Since the term  `formal' has an established, and different, meaning in rational homotopy theory, we have opted for a terminological change in the present paper.} dg vector space is an inverse limit of
finite-dimensional dg vector spaces.   All of our unmarked tensors are understood to be taken over
$\ground$. The tensor product $V\otimes W$ of two complete spaces is understood to be the completed tensor product (and so, it will again be complete). If $V$ is a discrete vector space and $W=\lim_\leftarrow{W_i}$ is
a complete space we will write $V\otimes W$ for
$\lim_{\leftarrow}V\otimes W_i$; thus for two discrete spaces $V$ and
$U$ we have $\Hom(V,U)\cong U\otimes V^*$.

A complete (non-unital) cdga is an inverse limit of finite-dimensional non-unital nilpotent cdgas; the category of non-unital cdgas is equivalent to the category of cocomplete cocommutative coalgebras. For a non-unital cdga $A$ we denote by $\tilde{A}$ the cdga obtained from $A$ by adjoining a unit; such an object will be called a \emph{unital} complete cdga. Given a unital complete cdga $\tilde{A}$, the augmentation ideal of $\tilde{A}$ recovers $A$. We will often omit the adjectives `unital' and `non-unital' when the correct meaning is clear from the context.

We use Hinich's results \cite{H}, on the closed model category of coalgebras, as they are interpreted in \cite{Laz} and we adopt the notation of the latter paper. In particular, we will denote by $\C$ and $\L$ the functors between  the categories of dglas and complete cdgas establishing an equivalence of the corresponding homotopy categories.

For two topological spaces $X$ and $Y$ we will write $[X,Y]$ for the set of homotopy classes of maps $X\to Y$; if $X$ and $Y$ are pointed spaces
then $[X,Y]_*$ will denote the set of pointed homotopy classes of such maps.
\subsection{$L_\infty$ algebras}
We will briefly recall the definition of an $L_\infty$ algebra. The reader is referred to \cite{Laz} for more detailed treatment, which includes the description of representing cdgas of $L_\infty$ algebra as Hinich cofibrant objects.

Let $V$ be a graded vector space. Let $\Der(\hat{S}\Sigma^{-1}V^*)$ be the graded Lie algebra consisting of continuous derivations of the completed symmetric algebra $\hat{S}\Sigma^{-1}V^*$. The derivations having vanishing constant term will be denoted by $\overline{\Der}(\hat{S}\Sigma^{-1}V^*)$; they form a sub-dgla in $\Der(\hat{S}\Sigma^{-1}V^*)$.
\begin{defi}\
\begin{itemize}
\item
An $L_\infty$ algebra supported on $V$ is an MC element $m_V\in \overline{\Der}(\hat{S}\Sigma^{-1}V^*)$. The element $m$ can be written as a sum $m_V=m=m_1+m_2+\ldots $ where $m_n=(m_V)_n$ is the part of $m$ of degree $n$ so we can write $m_n:\Sigma^{-1} V^*\to \hat{S}^n(\Sigma^{-1}V^*)$. The maps $m_n$ (or, rather, their duals) $(\Sigma V)^{\otimes n}\to \Sigma V$ are also called higher products in $V$. The pair $(V,m)$ will often be referred to as simply `$L_\infty$ algebra'.
\item
The derivation $m$ squares to zero and makes $\hat{S}\Sigma^{-1}V^*$ into a complete cdga; it will be called the \emph{representing} cdga of $V$. Note that $m_1$ is a differential on the graded vector space $V$; if it vanishes then the corresponding $L_\infty$ algebra is called \emph{minimal}.
\item
An $L_\infty$ map $f:V\to W$ is, by definition, a (continuous) map between the corresponding representing complete cdgas so that
$f:\hat{S}\Sigma^{-1}W^*\to \hat{S}\Sigma^{-1}V^*$. The degree $n$ part of $f$ will be denoted by $f_n$ so that $f_n:\Sigma^{-1}W^*\to \hat{S}^n(\Sigma^{-1}V^*)$; we will also use the same symbol for the dual map $(\Sigma V)^{\otimes n}\to \Sigma W$. An $L_\infty$ map $f$ is a \emph{weak equivalence}, or an \emph{$L_\infty$ quasi-isomorphism} if $f_1:\Sigma V\to \Sigma W$ is a quasi-isomorphism with respect to the differentials $(m_V)_1$ and $(m_W)_1$.
\end{itemize}
\end{defi}
The representing cdga of an $L_\infty$ algebra is just a cofibrant object in the Hinich closed model category of complete cdgas and thus, one is entitled to form the set $[V,W]_{L_\infty}$ of homotopy classes of $L_\infty$ maps $V\to W$. An explicit notion of (Sullivan) homotopy is defined in the usual way using the polynomial de Rham algebra of forms on an interval. We refer the reader to \cite{Laz} for details.

\subsection{Truncations of $L_\infty$ algebras}
Let $V$ be a homologically graded vactor space.
\begin{defi}
An $L_\infty$ algebra $(V,m)$ is called $n$-connected if $V_i=0$ for $i<n$.
\end{defi}
For any $L_\infty$ algebra $V$ and $n\geq 0$ there exists an $L_\infty$ algebra $V\langle n\rangle$ which is an $L_\infty$ analogue of the $n$-connected cover of a topological space. This construction was introduced in \cite{Laz} for $n=0$ but it goes through with obvious modifications for general $n\geq 0$. In our applications we will need the case $n=1$.
\begin{defi}
Let $(V,m_V)$ be an $L_\infty$ algebra; then its \emph{$n$-connected cover} is the $L_\infty$ algebra ${V}\langle n\rangle$ defined by the formula
\[{V}\langle n\rangle_i=\begin{cases}V_i, \text{~if~ } i>n\\ \Ker\{m_1:V_n\to V_{n-1}\} \text{~if~ } i=n\\0 \text{~if~} i<n\end{cases}.\]
The $L_\infty$ structure $m_{V\langle n\rangle}$ on $V\langle n\rangle$ is the obvious restriction of $m_V$.
\end{defi}
It is clear that there is a natural (strict) $L_\infty$ map ${V}\langle n\rangle\to V$, moreover for two $L_\infty$ quasi-isomorphic $L_\infty$ algebras $V$ and $U$ the corresponding $n$-connected covers ${V}\langle n\rangle$ and ${U}\langle n\rangle$ are $L_\infty$ quasi-isomorphic.
The following result is proved in the same way as Proposition 5.11 of \cite{Laz}.
\begin{prop}\label{conn}
For any $n$-connected $L_\infty$ algebra $V$ and an $L_\infty$ algebra $W$ there is a natural bijection $[V,W]_{L_\infty}\cong[V,W\langle n\rangle]_{L_\infty}$.
\end{prop}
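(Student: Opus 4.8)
The plan is to show that the asserted bijection is induced by post-composition with the natural strict $L_\infty$ map $\iota\colon W\langle n\rangle\to W$, and in fact to prove the stronger statement that $\iota$ induces a bijection already on the sets of $L_\infty$ \emph{maps} out of $V$, as well as on Sullivan homotopies between them; the bijection on homotopy classes then follows formally. Since $\iota$ is a strict $L_\infty$ map which on underlying spaces is the inclusion of a subspace, the composition formula gives $(\iota\bar f)_k=\iota_1\circ\bar f_k$ with $\iota_1$ injective, so $\iota_*$ is automatically injective on maps. The whole content is therefore a \emph{factorization} statement: every $L_\infty$ map $V\to W$, and every homotopy between two such, factors strictly through $W\langle n\rangle$.

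The key computation is a degree count. As $V$ is $n$-connected, $\Sigma V$ is concentrated in degrees $\ge n+1$, hence $(\Sigma V)^{\otimes k}$ in degrees $\ge k(n+1)$. A component $f_k\colon(\Sigma V)^{\otimes k}\to\Sigma W$ of an $L_\infty$ map preserves degree, so its image lies in $(\Sigma W)_{\ge k(n+1)}$. For $k\ge 2$ this gives image in $(\Sigma W)_{\ge 2n+2}\subseteq(\Sigma W)_{\ge n+2}$, a range in which $W\langle n\rangle$ agrees with $W$, so $f_k$ automatically lands in $\Sigma W\langle n\rangle$. The only borderline case is $k=1$: here $f_1$ intertwines the linear differentials $m_1$, and on the bottom piece $(\Sigma V)_{n+1}$ the source differential vanishes (it maps into $(\Sigma V)_n=0$), so $f_1$ carries $(\Sigma V)_{n+1}$ into $\Ker\{m_1\colon(\Sigma W)_{n+1}\to(\Sigma W)_n\}=(\Sigma W\langle n\rangle)_{n+1}$, while on higher degrees it lands in $(\Sigma W)_{\ge n+2}\subseteq\Sigma W\langle n\rangle$. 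Hence all components corestrict and $f$ factors as $\iota\circ\bar f$ for a unique $\bar f\colon V\to W\langle n\rangle$; together with the injectivity noted above, $\iota$ induces a bijection on $L_\infty$ maps out of $V$.

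The main obstacle is the analogous statement for homotopies, where the bookkeeping is more delicate. A Sullivan homotopy is (dually) encoded by components $H_k\colon(\Sigma V)^{\otimes k}\to\Sigma W\otimes\Omega$ valued in the tensor product with the de Rham forms $\Omega=\ground[t,dt]$, where $t$ has degree $0$ and $dt$ homological degree $-1$. The presence of $dt$ lowers total degree by one, so a priori $H_k$ could reach one degree further down in $\Sigma W$ than a plain map; I would verify that it does not. Writing $H_k$ as a $dt$-free part plus a $dt$-part, the $dt$-part of a degree-$0$ map valued in degrees $\ge k(n+1)$ uses $\Sigma W$ in degree $\ge k(n+1)+1$, hence stays safely inside $\Sigma W\langle n\rangle$, while the $dt$-free part obeys exactly the same bound as for maps. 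For the borderline linear term on the bottom piece one runs the chain-map relation for $H_1$ against the twisted differential $m_1\otimes 1\pm 1\otimes d_{\mathrm{dR}}$: its $dt$-free component forces the $dt$-free part of $H_1$ on $(\Sigma V)_{n+1}$ to be $m_1$-closed, i.e. to land in $\Ker m_1=(\Sigma W\langle n\rangle)_{n+1}$. Thus every homotopy also factors through $W\langle n\rangle$, so $\iota$ reflects, and obviously preserves, the homotopy relation.

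Combining the two factorization statements, $\iota_*$ descends to a bijection $[V,W\langle n\rangle]_{L_\infty}\cong[V,W]_{L_\infty}$; naturality is immediate since the map is induced throughout by the natural transformation $\iota$. I expect the homotopy step to be the only genuine difficulty, precisely because the extra $dt$-direction shifts degrees and one must check that neither it nor the bottom-degree cycle condition pushes the image of $H$ out of the cover.
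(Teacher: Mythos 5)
Your argument is correct and complete. Note that the paper itself gives no proof of Proposition \ref{conn}: it simply asserts that the statement ``is proved in the same way as Proposition 5.11 of \cite{Laz}'', so you have in effect supplied the missing argument rather than reproduced or diverged from one. What you do --- reduce everything to a strict factorization through the quotient cdga $\hat{S}\Sigma^{-1}W^*\twoheadrightarrow\hat{S}\Sigma^{-1}(W\langle n\rangle)^*$, then check by a degree count that every component $f_k$ (and $H_k$) of a map or homotopy out of an $n$-connected $V$ lands in $\Sigma W\langle n\rangle$ --- is exactly the expected mechanism, and you correctly isolate the only two delicate points: the borderline linear component on $(\Sigma V)_{n+1}$, where the chain-map identity (respectively its $dt$-free part, extracted from the relation against $m_1\otimes 1\pm 1\otimes d_{\mathrm{dR}}$) forces the image into $\Ker m_1=(\Sigma W\langle n\rangle)_{n+1}$; and the degree shift caused by $dt$, which you rightly observe moves the image \emph{up} by one in $\Sigma W$ and hence deeper into the stable range $\geq n+2$ where $W\langle n\rangle$ agrees with $W$. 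Two small points worth making explicit in a written-up version: the inequality $k(n+1)\geq n+2$ for $k\geq 2$ uses $n\geq 0$, which is implicit in the definition of the cover; and the corestricted components assemble into an honest $L_\infty$ map (respectively homotopy) to $W\langle n\rangle$ because the kernel of the surjection of representing cdgas is a dg ideal, $W\langle n\rangle$ being closed under the operations of $W$ by construction.
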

\noproof
\begin{rem}
The notion of an $n$-connected cover of an $L_\infty$ algebra is a direct analogue of the corresponding topological notion and so, it is most naturally formulated in the homologically graded context. Later on, however, we will need to consider this notion for \emph{cohomologically graded $L_\infty$ algebras}. The difference is, of course, purely terminological.
\end{rem}
\section{Chevalley-Eilenberg and Harrison cohomology  }
In this section we give definitions of the Chevalley-Eilenberg cohomology of $L_\infty$ algebras and of the Harrison-Andr\'e-Quillen (to be abbreviated to Harrison) cohomology of complete cdgas in a way that makes manifest the Gerstenhaber bracket on the corresponding complexes. In the rest of the paper we will use shorthand `CE' for `Chevalley-Eilenberg'.
\begin{defi}Let $(V,m)$ be an $L_\infty$ algebra with representing complete cdga $\hat{S}\Sigma^{-1}V^*$.
\begin{itemize}
\item The CE complex of $V$ is the cohomologically graded
dg vector space \[C_{\CE}(V,V):=\Sigma^{-1}\Der(\hat{S}\Sigma^{-1}V^*).\]
\item
The \emph{truncated} CE complex of $V$ is the
cohomologically graded
dg vector space \[\overline{C}_{\CE}(V,V):=\Sigma^{-1}\overline{\Der}(\hat{S}\Sigma^{-1}V^*).\]
\end{itemize}
\end{defi}
We now give a parallel definition of the Harrison cohomology of complete cdgas. For a complete cdga $A$ there exists a dgla $\L(A)$ whose underlying space is the free Lie algebra on $\Sigma A^*$ and the differential is induced by the product and the differential on $A$.
As a preparation for the definition of the Harrison cohomology let $\g$ be any cohomologically graded dgla and $\g\langle\tau\rangle$ be the dgla whose underlying graded Lie algebra is $\g$ with the freely adjoined variable $\tau$ with $|\tau|=1$. The differential $d^\tau$ in $\g\langle\tau\rangle$ is defined as follows. For $g\in\g$ we set $d^\tau(g)=d(g)+[g,\tau]$ and $d^\tau(\tau)=\frac{1}{2}[\tau,\tau]$.  The passage from $\g$ to $\g\langle\tau\rangle$ is the Lie analogue of adjoining the unit to an associative algebra. Further, denote by $\Der_\tau(\g\langle\tau\rangle)\subset \Der(\g\langle\tau\rangle)$ the graded Lie subalgebra consisting of those derivations whose image is contained in $\g \subset \g\langle\tau\rangle$. It is clear that $\Der_\tau(\g\langle\tau\rangle)$ is a dgla with the commutator bracket; moreover we have an isomorphism of dglas
$\Der_\tau(\g\langle \tau\rangle)\cong \Der(\g)\ltimes \g$, the semidirect product of $\Der(\g)$ and $\g$.

The following result shows that for a dgla $\g$ the CE complexes $C_{\CE}(\g,\g)$ and $\overline{C}_{\CE}(\g,\g)$ are a kind of left derived functors. This is a standard result, however we are not aware of any published reference.
\begin{lem}\label{CEcoh} Let $\g$ be a cofibrant dgla.
\begin{enumerate}
\item
The dgla $\Sigma\overline{C}_{\CE}(\g,\g)$ is quasi-isomorphic to the dgla $\Der(\g)$.
\item
The dgla $\Sigma{C}_{\CE}(\g,\g)$ is quasi-isomorphic to the dgla $\Der_\tau(\g\langle \tau\rangle)$.
\end{enumerate}
\end{lem}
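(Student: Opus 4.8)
The plan is to compare the derivation dgla of $\g$ with that of its representing cdga $A:=\hat{S}\Sigma^{-1}\g^*$ by an explicit map, and then to prove that this map is a quasi-isomorphism by a filtration argument that uses the (quasi-)freeness of a cofibrant $\g$. Unwinding the definitions, the cancelling suspensions give $\Sigma\overline{C}_{\CE}(\g,\g)=\overline{\Der}(A)$ and $\Sigma C_{\CE}(\g,\g)=\Der(A)$, so both assertions are comparisons of honest derivation dglas. For (1), I would introduce the map $\Phi\colon\Der(\g)\to\overline{\Der}(A)$ sending a derivation $D$ of the graded Lie algebra $\g$ to the derivation of $A$ determined by the dual map $\Sigma^{-1}\g^{*}\to\Sigma^{-1}\g^{*}$ (extended by the Leibniz rule); such derivations send generators to generators and so have vanishing constant term, i.e. land in $\overline{\Der}(A)$. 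For (2), I would extend $\Phi$ to $\Psi\colon\Der_\tau(\g\langle\tau\rangle)\cong\Der(\g)\ltimes\g\to\Der(A)$ whose restriction to the summand $\g$ (the value on $\tau$) is the constant-term derivation $\Sigma^{-1}\g^{*}\to\ground$ dual to the given element of $\g$; these are precisely the derivations $\overline{\Der}(A)$ omits.

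Next I would check that $\Phi$ and $\Psi$ are maps of dglas. Compatibility with brackets is the formal statement that dualization intertwines the commutator of derivations with the commutator in $\Der(A)$. Compatibility with differentials is the crucial point, and it is here that the defining property of elements of $\Der(\g)$ enters. Writing the representing differential as $m=m_1+m_2+\cdots$, the differential on $\Der(A)$ is $[m,-]$ while that on $\Der(\g)$ is $[\partial_\g,-]$; since $m_1$ is dual to $\partial_\g$ and $m_2$ is dual to the bracket, the term $[m_1,\Phi(D)]$ reproduces $\Phi([\partial_\g,D])$, whereas $[m_2,\Phi(D)]$ measures the failure of $D$ to be a derivation of the bracket and so vanishes for $D\in\Der(\g)$. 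The higher $m_n$ vanish because $\g$ is a strict dgla, so these are all the constraints. For $\Psi$ one verifies in addition that the bracket of a linear derivation with a constant-term derivation reproduces the semidirect action of $\Der(\g)$ on $\g$, a short computation inside $A$.

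To prove that $\Phi$ is a quasi-isomorphism I would reduce to the quasi-free case $\g=(\L(W),\partial)$, free as a graded Lie algebra with an adjoined differential, which models an arbitrary cofibrant dgla. Here $\Phi$ identifies $\Der(\g)=\Hom(W,\L(W))$ with the arity-zero, bracket-compatible derivations of $A$. I would then filter $\overline{\Der}(A)\cong\prod_{n\ge1}\Hom(S^n\Sigma\g,\Sigma\g)$ by the number $n$ of inputs, so that $[m,-]$ splits into $[m_1,-]$ (induced by $\partial$, preserving $n$) and $[m_2,-]$ (induced by the bracket, raising $n$ by one). The heart of the matter is that, because $\g$ is free as a graded Lie algebra, the resulting complex is acyclic for $n\ge2$, its only surviving cohomology being the bracket-derivations in the $n=1$ stratum; this is exactly the Koszulness of the Lie operad, equivalently the acyclicity of the reduced Chevalley--Eilenberg complex of a free Lie algebra. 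Since that surviving cohomology is precisely the image of $\Phi$, the map $\Phi$ is a quasi-isomorphism, and a standard convergence argument (the filtration being exhaustive and complete in each internal degree) completes (1).

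Finally, statement (2) follows by running the same filtration argument one step further. The inclusion $\overline{\Der}(A)\hookrightarrow\Der(A)$ is a sub-dgla whose quotient is the abelian complex $Q$ of constant-term derivations (the $n=0$ stratum), which one identifies with the internal complex $\Sigma\g$; under $\Psi$ this stratum is matched isomorphically with the summand $\g$ of $\Der(\g)\ltimes\g$ (equivalently the adjoined variable $\tau$). Filtering $\Der(A)$ by $n\ge0$ and comparing via $\Psi$, the $n=0$ level contributes the quasi-isomorphism $\g\xrightarrow{\sim}Q$ while the levels $n\ge1$ contribute the quasi-isomorphism of (1), so $\Psi$ is a quasi-isomorphism on the whole of $\Der(A)=\Sigma C_{\CE}(\g,\g)$. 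I expect the genuine obstacle to be the acyclicity computation of the third paragraph---establishing, for quasi-free $\g$, that the higher-arity part of the Chevalley--Eilenberg complex is exact---whereas the verification that $\Phi$ and $\Psi$ are dgla maps, the reduction to the quasi-free case, and the bookkeeping of the constant-term stratum in (2) are routine but must be carried out with care for signs and grading conventions.
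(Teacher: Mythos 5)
Your overall strategy coincides with the paper's: realize $\Der(\g)$ as the linear derivations of $A=\hat{S}\Sigma^{-1}\g^*$, prove the inclusion into $\overline{\Der}(A)$ is a quasi-isomorphism by reducing to the vanishing of the CE cohomology of a free graded Lie algebra in CE degrees $\geq 2$, and deduce (2) from (1) by splitting off the constant-term stratum, which matches the decomposition $\Der_\tau(\g\langle\tau\rangle)\cong\Der(\g)\ltimes\g$. The key classical input is also correctly identified. However, the filtration you propose for the ``heart of the matter'' does not deliver it. Filtering $\overline{\Der}(A)\cong\prod_{n\ge 1}\Hom(S^n\Sigma\g,\Sigma\g)$ by arity makes $[m_1,-]$ the associated-graded differential (it preserves $n$ while $[m_2,-]$ raises it), so the next page of the spectral sequence is $\prod_n\Hom(S^n\Sigma H(\g),\Sigma H(\g))$ with differential the CE differential of the graded Lie algebra $H(\g)$. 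The hypothesis that $\g$ is free as a graded Lie algebra is invisible at this stage --- $H(\g)$ is not free --- so the claimed acyclicity for $n\ge 2$ does not follow. Nor can any filtration by arity alone be arranged so that $[m_2,-]$ becomes the leading differential: $[m_1,-]$ preserves arity and therefore can never be made to strictly raise the filtration degree.

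The missing ingredient is the filtration that comes with \emph{standard} cofibrancy (a general cofibrant dgla is a retract of a standard cofibrant one, so reducing merely to ``quasi-free'' is also not quite the right move): $\g$ is free on generators carrying an exhaustive filtration with respect to which the differential lowers filtration degree, so that the associated graded is a free graded Lie algebra with \emph{zero} differential. This induces compatible filtrations on $\Der(\g)$ and on $\overline{\Der}(A)$, by the amount a derivation raises the filtration degree; on the associated graded one is left with precisely the reduced CE complex of a free graded Lie algebra with zero differential, where the classical vanishing in CE degrees $\ge 2$ applies and the surviving part in CE degree $\le 1$ is $\Der$ of that free Lie algebra, matching the associated graded of $\Der(\g)$. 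Your construction of the comparison map, the verification that it is a map of dglas, and the handling of the constant-term stratum in (2) are all sound and agree with the paper; only the spectral-sequence step needs to be rerouted through this generator filtration.
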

\begin{proof}
It suffices to consider the case when $\g$ is standard cofibrant, i.e. it is free on a certain collection of generations and has a filtration with respect to which the associated graded Lie algebra is a free Lie algebra with vanishing differential. To prove (1) let $\hat{S}\Sigma^{-1}\g^*$ be the representing complete cdga of $\g$ and recall that the CE complex $\overline{C}_{\CE}(\g,\g)$ is defined as $\Sigma \overline{C}_{\CE}(\g,\g)=\overline{\Der}(\hat{S}\Sigma^{-1}\g^*)$, the derivations with vanishing constant term. Considering derivations of $\g$ as \emph{linear} derivations of $\hat{S}\Sigma^{-1}\g^*$ we have an inclusion of dglas $\Der(\g)\hookrightarrow \Sigma\overline{C}_{\CE}(\g)$; we wish to prove that this map is a quasi-isomorphism.

Note that the given filtration on $\g$ gives a filtration on $\Der(\g)$ where we say that $\xi\in\Der(\g)$ has filtration degree $q$ if it raises the filtration degree of any element in $\g$ by at most $q$. We have a similar filtration on $\Sigma\overline{C}_{\CE}(\g)$; the inclusion $\Der(\g)\hookrightarrow \Sigma\overline{C}_{\CE}(\g)$ is a map of filtered dglas and to show that it is a quasi-isomorphism it suffices to show that it is so on the associated graded dglas. This, in turn, reduces to showing that the CE cohomology of free (graded) Lie algebras vanish in CE degrees $>1$, which is well-known.

Finally, part (2) follows from (1) since $\Sigma C_{\CE}(\g,\g)\cong \Sigma\overline{C}_{\CE}(\g,\g)\ltimes \g$.
\end{proof}
\begin{defi}Let $A$ be a complete non-unital cdga.
\begin{itemize}
\item The Harrison complex of $A$ is the cohomologically graded
dg vector space
\[C_{\Harr}(A,A):=\Sigma^{-1}\Der_\tau(\L({A})\langle\tau\rangle).\]
\item
The \emph{truncated} Harrison complex of $A$ is the cohomologically graded
dg vector space
\[\overline{C}_{\Harr}(A,A):=\Sigma^{-1}\Der(\L(A)).\]
\end{itemize}
\end{defi}
Note that for an $L_\infty$ algebra $V$ the dg space $\Sigma C_{\CE}(V,V)$ has the structure of a dgla with respect to the commutator bracket and $\Sigma \overline{C}_{\CE}(V,V)\subset \Sigma C_{\CE}(V,V)$ is an inclusion of sub dglas. Similarly for a complete cdga $A$ we have an inclusion of dglas $\Sigma \overline{C}_{\Harr}(A,A)\subset \Sigma C_{\Harr}(A,A)$. The following result summarizes properties of CE and Harrison cohomology. 
\begin{theorem}\label{CEH}\
\begin{enumerate}
\item Let $V$ and $U$ be two $L_\infty$ quasi-isomorphic $L_\infty$ algebras. Then the dglas $\Sigma C_{\CE}(V,V)$ and $\Sigma C_{\CE}(U,U)$ are $L_\infty$ quasi-isomorphic. Similarly the dglas $\Sigma \overline{C}_{\CE}(V,V)$ and $\Sigma \overline{C}_{\CE}(U,U)$ are $L_\infty$ quasi-isomorphic.
\item
Let $A$ and $B$ be two weakly equivalent complete cdgas. Then the dglas $\Sigma C_{\Harr}(A,A)$ and $\Sigma C_{\Harr}(B,B)$  are $L_\infty$ quasi-isomorphic. Similarly the dglas $\Sigma \overline{C}_{\Harr}(A,A)$ and $\Sigma \overline{C}_{\Harr}(B,B)$ are $L_\infty$ quasi-isomorphic.
\item
Let $A$ be a complete non-unital cdga and consider the corresponding cofibrant dgla $\L(A)$. The the following dglas are $L_\infty$ quasi-isomorphic:
\[
\Sigma C_{\CE}(\L(A),\L(A))\simeq \Sigma C_{\Harr}(A,A);
\]
\[
\Sigma \overline{C}_{\CE}(\L(A),\L(A))\simeq \Sigma\overline{C}_{\Harr}(A,A)
\]
\item
Let $V$ be an $L_\infty$ algebra and $\hat{S}\Sigma^{-1}V^*$ be its representing complete cdga. Then the following dglas are $L_\infty$ quasi-isomorphic:
\[
\Sigma C_{\CE}(V,V)\simeq \Sigma C_{\Harr}(\hat{S}\Sigma^{-1}V^*,\hat{S}\Sigma^{-1}V^*);
\]
\[
\Sigma \overline{C}_{\CE}(V,V)\simeq \Sigma\overline{C}_{\Harr}(\hat{S}\Sigma^{-1}V^*,\hat{S}\Sigma^{-1}V^*)
\]
\end{enumerate}
\end{theorem}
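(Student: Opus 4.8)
Theorem \ref{CEH} asserts four items. Let me plan a proof.

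Looking at the structure, items (1)–(4) together establish invariance properties and comparison isomorphisms for the CE and Harrison complexes. Let me think about how these interlock.

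---

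The plan is to deduce all four statements from a single homotopy-invariance property of derivation dglas together with the identification already recorded in Lemma~\ref{CEcoh}, carrying out the deductions in the order (3), (1), (4), (2) (so that no circularity arises) and proving the invariance property last. Part (3) is in fact a direct translation of Lemma~\ref{CEcoh}. Taking $\g=\L(A)$, which is cofibrant by hypothesis, the definitions give $\Sigma\overline{C}_{\Harr}(A,A)=\Der(\L(A))$ and $\Sigma C_{\Harr}(A,A)=\Der_\tau(\L(A)\langle\tau\rangle)$, while $\Sigma\overline{C}_{\CE}(\L(A),\L(A))=\overline{\Der}(\hat S\Sigma^{-1}\L(A)^*)$ and $\Sigma C_{\CE}(\L(A),\L(A))=\Der(\hat S\Sigma^{-1}\L(A)^*)$. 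The two quasi-isomorphisms furnished by Lemma~\ref{CEcoh}(1) and (2) are precisely the asserted equivalences, and being dgla maps they are in particular $L_\infty$ quasi-isomorphisms.

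The key step, which I establish last, is the following invariance statement: if $R\simeq S$ is a weak equivalence of cofibrant complete cdgas, then $\Der(R)\simeq\Der(S)$ and $\overline{\Der}(R)\simeq\overline{\Der}(S)$ as dglas. Granting this, part (1) is immediate, since an $L_\infty$ quasi-isomorphism $V\simeq U$ is by definition a weak equivalence of the representing cdgas $R_V=\hat S\Sigma^{-1}V^*$ and $R_U=\hat S\Sigma^{-1}U^*$, and $\Sigma C_{\CE}(V,V)=\Der(R_V)$, $\Sigma\overline{C}_{\CE}(V,V)=\overline{\Der}(R_V)$ by definition. For part (4) I use the unit of the Quillen equivalence $(\L,\C)$: for $R=\hat S\Sigma^{-1}V^*$ the map $R\to\C\L(R)$ is a weak equivalence, so $V$ and $\L(R)$ have weakly equivalent representing cdgas and are therefore $L_\infty$ quasi-isomorphic; part (1) then gives $\Sigma C_{\CE}(V,V)\simeq\Sigma C_{\CE}(\L(R),\L(R))$, and part (3) applied to $A=R$ identifies the latter with $\Sigma C_{\Harr}(R,R)$, with the truncated case identical. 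Finally, for part (2), since $\L$ induces an equivalence of homotopy categories it carries $A\simeq B$ to a weak equivalence $\L(A)\simeq\L(B)$ of dglas; applying the already-proven part (1) to these $L_\infty$ algebras gives $\Sigma C_{\CE}(\L(A),\L(A))\simeq\Sigma C_{\CE}(\L(B),\L(B))$, and part (3) rewrites both sides as the corresponding Harrison complexes.

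It remains to prove the invariance statement, which is the one genuinely nontrivial point and the main obstacle. I would argue via the decomposition theorem for $L_\infty$ algebras: any $V$ is $L_\infty$-isomorphic to $V_{\mathrm{min}}\oplus C$, where $V_{\mathrm{min}}$ is a minimal model supported on $H(V,m_1)$ and $C$ is linear contractible. Passing to representing cdgas turns this direct sum into a (completed) tensor product, $R_V\cong \hat S\Sigma^{-1}V_{\mathrm{min}}^*\otimes E$ with $E:=\hat S\Sigma^{-1}C^*$ acyclic, the differential on $R_V$ being the sum of the two. Since a weak equivalence $V\simeq U$ forces $V_{\mathrm{min}}\cong U_{\mathrm{min}}$, it suffices to show that adjoining the contractible factor does not alter the homotopy type of the derivation dgla, i.e.\ that the inclusion $\Der(\hat S\Sigma^{-1}V_{\mathrm{min}}^*)\hookrightarrow\Der(\hat S\Sigma^{-1}V_{\mathrm{min}}^*\otimes E)$, extending a derivation by zero on the generators of $C$, is a quasi-isomorphism. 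One checks directly that this is a map of dglas, and I would prove it is a quasi-isomorphism by filtering the target by powers of the augmentation ideal of $E$: on the associated graded the differential is induced by the acyclic differential of $E$, so the contracting homotopy of $E$ yields a contracting homotopy on the complementary summand of derivations that involve the generators of $C$, leaving exactly $\Der(\hat S\Sigma^{-1}V_{\mathrm{min}}^*)$ in cohomology. The truncated variant $\overline{\Der}$ is handled by the same filtration, which respects the vanishing of the constant term.
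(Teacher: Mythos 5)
Your proposal is correct, and the deductions of (3) from Lemma \ref{CEcoh} and of (4) and (2) from the other parts follow essentially the same outline as the paper; the genuine difference is in how you establish the key homotopy-invariance statement underlying part (1). The paper argues model-categorically: it factors the weak equivalence $f\colon\hat{S}\Sigma^{-1}V^*\to\hat{S}\Sigma^{-1}U^*$ into a cofibration followed by a fibration, uses the resulting section $g$ (with $f\circ g=\id$) to produce an injective dgla map $\Der(\hat{S}\Sigma^{-1}U^*)\to\Der(\hat{S}\Sigma^{-1}V^*)$, and verifies it is a quasi-isomorphism by comparing both sides with the bimodule derivation complex $\Der(\hat{S}\Sigma^{-1}V^*,\hat{S}\Sigma^{-1}U^*)$; this yields a zig-zag of dgla quasi-isomorphisms. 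You instead invoke the decomposition theorem $V\cong V_{\min}\oplus C$, reduce to showing that tensoring the representing cdga with the acyclic factor $E=\hat{S}\Sigma^{-1}C^*$ does not change the derivation dgla, and prove that by filtering by powers of the augmentation ideal of $E$. Both routes work. Your argument is more hands-on and makes the zig-zag completely explicit (two inclusions of derivation dglas through the common minimal model), at the cost of importing the minimal-model/decomposition package and of some unstated care about convergence of the spectral sequence for the complete decreasing filtration (the paper's own proof of Lemma \ref{CEcoh} is equally cavalier on this point, so I do not count it against you); the paper's argument is shorter given the model-category infrastructure it has already set up and avoids minimal models entirely, which also makes it adapt more readily to part (2), where the paper simply reruns the same formal argument for the cofibrant dglas $\L(A)$, $\L(B)$ rather than routing (2) through (1) and (3) as you do.
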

\begin{proof}
We will start with (1). Note that a given $L_\infty$ quasi-isomorphism $U\to V$ \emph{does not} induce any map between $ C_{\CE}(V,V)$ and $ C_{\CE}(U,U)$. The standard argument using the closed model category structure on complete cdgas (cf. for example \cite{BL}, Theorem 2.8) can be used
to tackle this difficulty. Let $f:\hat{S}\Sigma^{-1}V^*\to \hat{S}\Sigma^{-1}U^*$ be a weak equivalence of complete cdgas representing the $L_\infty$ algebras $U$ and $V$; note that these are both cofibrant complete cdgas. Next, representing $f$ as a composition of a cofibration and a fibration of complete cdgas we reduce the problem to the case when $f$ itself is either fibration or a cofibration. Let us suppose that $f$ is a fibration; the case of a cofibration is considered similarly. Then $f$ has a right splitting, i.e. a map $g$ such that $f\circ g=\id$. Using the map $g$ we define an injective map of dglas $\Der(\hat{S}\Sigma^{-1}U^*)\to \Der(\hat{S}\Sigma^{-1}V^*)$. That this map is a quasi-isomorphism follows from the fact that both $\Der(\hat{S}\Sigma^{-1}U^*)$ and $\Der(\hat{S}\Sigma^{-1}V^*)$ are quasi-isomorphic to $\Der(\hat{S}\Sigma^{-1}V^*,\hat{S}\Sigma^{-1}U^*)$ of derivations of $\hat{S}\Sigma^{-1}V^*$ in $\hat{S}\Sigma^{-1}U^*$ where the $\hat{S}\Sigma^{-1}U^*$-module structure on $\hat{S}\Sigma^{-1}U^*$ is given by the map $f$. Note that we have actually constructed a \emph{zig-zag} of quasi-isomorphisms between the dglas $\Der(\hat{S}\Sigma^{-1}U^*)$ and $ \Der(\hat{S}\Sigma^{-1}V^*)$. It follows that the dglas $\Sigma C_{\CE}(V,V)$ and $\Sigma C_{\CE}(U,U)$ are $L_\infty$ quasi-isomorphic. The argument for $ \overline{C}_{\CE}(V,V)$ and $ \overline{C}_{\CE}(U,U)$ is the same.

To prove (2) note that for a complete cdga $A$ the dgla $\L(A)$ is cofibrant in the closed model category of dglas. Moreover, a weak equivalence between two complete cdgas $A$ and $B$ gives rise (by definition) to a quasi-isomorphism between the dglas $\L(A)$ and $\L(B)$. Then the same formal argument as for part (1) can be used to establish part (2).

To prove (3) one only has to note that $\L(A)$ is a cofibrant dgla, use Lemma \ref{CEcoh} and the definitions of $C_{\Harr}(A,A)$ and $\overline{C}_{\Harr}(A,A)$.

Part (4) is a consequence of (3), (1) and (2). Let us, for example, prove the that the dglas $\Sigma \overline{C}_{\CE}(V,V)$ and $\Sigma\overline{C}_{\Harr}(\hat{S}\Sigma^{-1}V^*,\hat{S}\Sigma^{-1}V^*)$ are $L_\infty$ quasi-isomorphic. By (1) and (2) we may replace $V$ by a cofibrant dgla of the form $\C(A)$ for some complete cdga $A$. Then the desired $L_\infty$ quasi-isomorphism takes the form:
\begin{equation}\label{des}
\Sigma \overline{C}_{\CE}(\L(A),\L(A))\simeq \Sigma\overline{C}_{\Harr}(\C\L(V),\C\L(V)).
\end{equation}
But the complete cdgas $\C\L(A)$ and $A$ are weakly equivalent and using (2) again we reduce (\ref{des}) to the following $L_\infty$ quasi-isomorphism:
\[
\Sigma \overline{C}_{\CE}(\L(A),\L(A))\simeq \Sigma\overline{C}_{\Harr}(A,A),
\]
which is true by (3).
\end{proof}
\begin{rem}\label{fibs}\
\begin{itemize}\item
Let $V$ be an $L_\infty$ algebra. Then it was proved\footnote{The grading conventions in \cite{CL} differ by a shift from the present ones.} in \cite{CL} that the inclusion of dglas $\Sigma\overline{C}_{\CE}(V,V)\to \Sigma C_{\CE}(V,V)$ can be extended to a homotopy fiber sequence of $L_\infty$ algebras $V\to\Sigma\overline{C}_{\CE}(V,V)\to\Sigma C_{\CE}(V,V)$ (note that the map $V\to\Sigma\overline{C}_{\CE}(V,V)$ is a (non-strict) $L_\infty$ map). There is also an analogue of this statement for $A_\infty$ algebras and Hochschild complexes. The results of this section suggests a parallel treatment for the Harrison cohomology. Indeed, for a complete non-unital cdga $A$ we have a homotopy fiber sequence of $L_\infty$ algebras
$\L(A)\to\Sigma\overline{C}_{\Harr}(A,A)\to \Sigma{C}_{\Harr}(A,A)$. It is very likely that this fiber sequence continues to hold for non-complete cgdas $A$ and also for $C_\infty$ algebras but we will not pursue this issue further.
\item
It is likely that Theorem \ref{CEH} admits a suitable generalization in the context of homotopy algebras over a pair of Koszul dual operads. In this connection we mention the recent prerint \cite{DW}, Theorem 3.1 where part (1) of Theorem \ref{CEH} was generalized in this way.
\end{itemize}
\end{rem}
\section{Extensions of $L_\infty$ algebras and their classification}
In this section we will study in some detail the notion of extension of $L_\infty$ algebras introduced independently in \cite{Me} and \cite{CL'}. An essentially equivalent notion was considered by Merkulov, \cite{Mer} under the name \emph{open-closed homotopy Lie algebra}.
\begin{defi}\
\begin{enumerate}
\item
Let $(V,m)$ be an $L_\infty$ algebra; then a subspace $I\subset V$ is called an $L_\infty$ ideal in $V$ if for any $n$ we have $m_n(\Sigma v_1,\ldots,\Sigma v_n)\in\Sigma I$ as long as at least one of the $v_i$'s belongs to $I$.
In that case both $I$ and $U=V/I$ inherit structures of $L_\infty$ algebras.
\item The resulting sequence of $L_\infty$ algebras and maps
\[
I\to V\to U
\]
is called an \emph{extension} of $U$ by $I$ (or having $I$ as the kernel, or fiber).
\end{enumerate}
\end{defi}
\begin{rem}
It is clear that the classical notion of a Lie algebra extension is a special case of the above definition. It is, perhaps, more instructive to view the notion of an $L_\infty$ extension as a \emph{deformation with a dg base}. Indeed, let $\hat{S}\Sigma^{-1}I^*$, $\hat{S}\Sigma^{-1}V^*$ and $\hat{S}\Sigma^{-1}U^*$ be the representing cdgas of the $L_\infty$ algebras $I,V$ and $U$ respectively. Choosing a splitting of vector spaces $V\cong U\oplus I$ we write \[\hat{S}\Sigma^{-1}V^*\cong \hat{S}\Sigma^{-1}I^*\otimes \hat{S}\Sigma^{-1}U^*.\] Thus, the cdga $\hat{S}\Sigma^{-1}V^*$ becomes a $\hat{S}\Sigma^{-1}U^*$-linear cdga. Moreover, the condition that $I$ is an $L_\infty$ ideal in $V$ implies that the quotient of $\hat{S}\Sigma^{-1}V^*$ by the ideal generated by $\Sigma^{-1} U^*$ is isomorphic to $\hat{S}\Sigma^{-1}I^*$, the representing cdga of the $L_\infty$ algebra $I$. Thus, the $L_\infty$ algebra $V$ can be considered as a (generalized) deformation of the $L_\infty$ algebra $I$ over the dg base $(U,m_U)$. For example, if $U$ is the one-dimensional $L_\infty$ algebra sitting in degree $-1$ then we recover the usual notion of a one-parameter formal deformation of $V$, i.e. an $L_\infty$ algebra over the ring $\ground[[t]], |t|=0$  which is topologically free over $\ground[[t]]$ and reduces to $I$ upon setting $t=0$.
\end{rem}
We will have a chance to use the notion of a \emph{homotopy fiber sequence} of $L_\infty$ algebras which is, as we will see, essentially equivalent to that of an $L_\infty$ extension. Recall \cite{Laz} that representing cdgas of $L_\infty$ algebras are cofibrant objects in Hinich's closed model category of complete cgdas.
\begin{defi}
Let $\h\to\g\to\mathfrak f$ be a sequence of $L_\infty$ algebras. It is called a homotopy fiber sequence of $L_\infty$ algebras if the corresponding
sequence of representing cdgas \[\hat{S}\Sigma^{-1}{\mathfrak f}^*\to \hat{S}\Sigma^{-1}{\mathfrak g}^*\to\hat{S}\Sigma^{-1}{\mathfrak h}^*\] is equivalent, in the homotopy category of complete cdgas, to a sequence $A\to B\to C$ where $A\to B$ is a cofibration of complete cdgas and $C\cong {B}\otimes_A\ground$ is the corresponding cokernel.
\end{defi}
\begin{rem}
Let $\h\to\g\to\mathfrak f$ be a usual short exact sequence of dglas (e.g. ordinary Lie algebras). This is clearly an example of a homotopy fiber sequence of $L_\infty$ algebras. Moreover, using Hinich's equivalence of homotopy categories of dglas and complete cdgas any homotopy fiber sequence of $L_\infty$ algebras gives a short exact sequence of dglas and vice-versa.
\end{rem}
\begin{prop}\label{fibseq}
Let $e:I\to V\to U$ be an extension of $L_\infty$ algebras. Then it is a homotopy fiber sequence  of $L_\infty$ algebras. Conversely, any homotopy fiber sequence of $L_\infty$ algebras is homotopy equivalent to an $L_\infty$ extension.
\end{prop}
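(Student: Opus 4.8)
The plan is to prove both directions by translating between the algebraic structure on $L_\infty$ ideals and the cofibration/cokernel language defining homotopy fiber sequences. The two directions are somewhat asymmetric in difficulty: the forward direction is essentially a direct verification, whereas the converse requires rectifying an arbitrary homotopy fiber sequence into a strict extension up to homotopy equivalence.

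For the forward direction, I would start from an extension $e\colon I\to V\to U$ and choose a vector space splitting $V\cong U\oplus I$, as in the Remark following the definition of extension. This gives an isomorphism of graded (not yet differential) cdgas $\hat{S}\Sigma^{-1}V^*\cong\hat{S}\Sigma^{-1}I^*\otimes\hat{S}\Sigma^{-1}U^*$, exhibiting $\hat{S}\Sigma^{-1}V^*$ as a $\hat{S}\Sigma^{-1}U^*$-algebra. I would then take $A=\hat{S}\Sigma^{-1}U^*$, $B=\hat{S}\Sigma^{-1}V^*$, and verify that the inclusion $A\hookrightarrow B$ is a cofibration of complete cdgas: since both $A$ and $B$ are free as complete graded commutative algebras and $B$ is free over $A$ on the generators $\Sigma^{-1}I^*$, this is the standard fact that a (relatively) free extension of Hinich-cofibrant objects is a cofibration. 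The key point to check is that the $L_\infty$ ideal condition on $I$ is exactly what forces the quotient $B\otimes_A\ground\cong B/(\Sigma^{-1}U^*)$ to coincide, as a cdga, with $\hat{S}\Sigma^{-1}I^*$; this is already observed in the Remark, so this direction reduces to identifying the cokernel with $\hat{S}\Sigma^{-1}I^*=C$ and noting the sequence is of the required form on the nose, hence certainly equivalent to one in the homotopy category.

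For the converse, suppose $\h\to\g\to\f$ is a homotopy fiber sequence, so the sequence of representing cdgas is equivalent in the homotopy category to some $A\to B\to C$ with $A\to B$ a cofibration and $C\cong B\otimes_A\ground$. The strategy is to replace this equivalent sequence by one built from an honest $L_\infty$ ideal. Since $A\to B$ is a cofibration of cofibrant complete cdgas, I would use the structure theory of Hinich cofibrations to write $B$, up to isomorphism over $A$, as a relatively free object $A\otimes\hat{S}\Sigma^{-1}W^*$ for a graded vector space $W$; here $W$ is the graded vector space underlying the $L_\infty$ algebra that will play the role of the kernel. Setting $I=W$ with the induced $L_\infty$ structure, $U$ the $L_\infty$ algebra represented by $A$, and $V$ the one represented by $B$, the relatively free description gives precisely a vector-space splitting $V\cong U\oplus I$ with $I$ an $L_\infty$ ideal, and the cokernel condition $C\cong B\otimes_A\ground$ guarantees that the quotient $L_\infty$ structure on $V/I$ agrees with that of $U$. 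Transporting the original sequence of cdgas back along the homotopy equivalences then exhibits $\h\to\g\to\f$ as homotopy equivalent to the extension $I\to V\to U$.

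The main obstacle is the converse direction, and specifically the step of promoting an abstract cofibration $A\to B$ to a genuine relatively free extension with an identifiable splitting. The difficulty is that ``equivalent in the homotopy category'' only gives the cofibration up to weak equivalence, so one must either lift the equivalences through the cofibration-cokernel data or factor and retract along the model structure (much as in the proof of Theorem \ref{CEH}(1), where a weak equivalence was replaced by a fibration admitting a splitting $g$ with $f\circ g=\id$). I expect the cleanest route is to invoke the explicit description of cofibrations in Hinich's model category of complete cdgas — every cofibration between cofibrant objects is a retract of a relatively free map — and then observe that the retraction does not disturb the cokernel, so the ideal structure survives. Care is needed to ensure the $L_\infty$ structure induced on $W$ via the splitting is compatible with the differential on $B$, but this compatibility is automatic once $B$ is written as $A\otimes\hat{S}\Sigma^{-1}W^*$ with differential restricting correctly on the $A$-factor and projecting correctly to the cokernel.
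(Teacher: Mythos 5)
Your forward direction has the same shape as the paper's: show that the representing map $f\colon \hat{S}\Sigma^{-1}U^*\to\hat{S}\Sigma^{-1}V^*$ is a cofibration and identify its cokernel with $\hat{S}\Sigma^{-1}I^*$. But the step you dismiss as ``the standard fact that a relatively free extension of Hinich-cofibrant objects is a cofibration'' is precisely what the paper has to prove: in Hinich's model structure cofibrations are characterized by a lifting property, not by relative freeness, and the paper's argument is to exhibit $f$ as a retract of $\C\L(f)$, observe that $\L(f)$ is a surjection and hence a fibration of dglas, and use that $\C$ carries fibrations of dglas to cofibrations of complete cdgas. So this direction is acceptable in outline but the key assertion still needs an argument of roughly that kind.

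The genuine gap is in your converse. You want to rewrite the cofibration $A\to B$ as a relatively free map $A\to A\otimes\hat{S}\Sigma^{-1}W^*$, and you correctly note that the structure theory only yields that $A\to B$ is a \emph{retract} of such a map. That is where the argument breaks: a retract of a relatively free map need not itself be relatively free (a cofibration between free complete cdgas can send generators to decomposables, in which case $B$ is not free over the image of $A$), and the remark that ``the retraction does not disturb the cokernel, so the ideal structure survives'' does not repair this --- the vector-space splitting $V\cong U\oplus I$ you need lives on the relatively free object, not on $B$, and the retract diagram provides no weak equivalence between the two sequences, so you cannot pass to the relatively free replacement without changing the homotopy type. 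The paper sidesteps the issue entirely by moving to the Lie side: applying the functor $\L$ to the cofibration of representing cdgas yields a \emph{surjective} map of dglas (the dual of an injection is a surjection, and the free Lie functor preserves surjectivity); its kernel $I$ then gives a strict short exact sequence of dglas, which is in particular an $L_\infty$ extension, and this sequence is weakly equivalent to the original homotopy fiber sequence via the natural maps $\C\L\Rightarrow\id$. That maneuver --- trading a cofibration of complete cdgas, which is hard to normalize, for a surjection of dglas, whose kernel is an honest ideal --- is the idea your proposal is missing; without it, or without an actual proof that every Hinich cofibration between representing cdgas is isomorphic over $A$ to a relatively free extension, the converse is incomplete.
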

\begin{proof}
Let $e:I\to V\to U$ be an $L_\infty$ extension. Denote by $f:\hat{S}\Sigma^{-1}{U}^*\to \hat{S}\Sigma^{-1}{V}^*$ the map on the level of representing cdgas that corresponds to the map $V\to U$ in $e$.  We claim that $f$ is a cofibration  of complete cdgas; this will clearly imply that $e$ is a homotopy fiber sequence of $L_\infty$ algebras. To see that consider the following diagram of complete cdgas:
\[
\xymatrix{\C\L\hat{S}\Sigma^{-1}{U}^*\ar^{\C\L(f)}[r]\ar[d]& \C\L\hat{S}\Sigma^{-1}{V}^*\ar[d]\\
\hat{S}\Sigma^{-1}{U}^*\ar^f[r]& \hat{S}\Sigma^{-1}{V}^*}
\]
Here $\L$ and $\C$ are the adjoint pair of functors between the categories of complete cdgas and dglas (also known as the Harrison and CE complexes, cf. \cite{Laz} for a detailed discussion) which establish a Quillen equivalence between these closed model categories.
The downward vertical maps in the above diagram are fibrations and weak equivalences of complete cdgas and since $\hat{S}\Sigma^{-1}{U}^*$
and $\hat{S}\Sigma^{-1}{V}^*$ are cofibrant by \cite{Laz}, Proposition 3.3, these maps admit sections and it follows that the map
$f:\hat{S}\Sigma^{-1}{U}^*\to \hat{S}\Sigma^{-1}{V}^*$ is a retract of $\C\L(f):\C\L\hat{S}\Sigma^{-1}{U}^* \to\C\L\hat{S}\Sigma^{-1}{V}^*$. Next, the map $\L(f):\L\hat{S}\Sigma^{-1}{U}^*\to\L\hat{S}\Sigma^{-1}{V}^*$ is clearly surjective and thus, is a fibration of dglas. Since the functor $\C$ converts fibrations of dglas into cofibrations of complete cdgas we conclude that $\C\L(f)$ is a cofibration and thus, so is its retract $f$.

Conversely, let $\h\to\g\to\mathfrak f$ be a homotopy fiber sequence of $L_\infty$ algebras; we wish to show that it is homotopy equivalent to an $L_\infty$ extension. Denote the representing map of $\g\to \mathfrak f$ by $f:\hat{S}\Sigma^{-1}{\f}^*\to \hat{S}\Sigma^{-1}{\g}^*$. Without loss of generality we can assume the map $f$ is a cofibration of complete cdgas (and thus, injective) and $\hat{S}\Sigma^{-1}{\h}^*$ is the cokernel of $f$. We have the following commutative diagram of complete cdgas
\[
\xymatrix{\C\L\hat{S}\Sigma^{-1}\f^*\ar^{\C\L(f)}[r]\ar[d]& \C\L\hat{S}\Sigma^{-1}{\g}^*\ar[d]\\
\hat{S}\Sigma^{-1}{\f}^*\ar^f[r]& \hat{S}\Sigma^{-1}{\g}^*}
\]
where vertical maps are weak equivalences of complete cdgas. Note that the dgla map $\L(f):\L\hat{S}\Sigma^{-1}\f^*\to\L\hat{S}\Sigma^{-1}{\g}^*$ is surjective; denote its kernel by $I$. Then the sequence of dgla maps $I\to \L\hat{S}\Sigma^{-1}\f^*\to\L\hat{S}\Sigma^{-1}{\g}^*$ could be viewed as an $L_\infty$ extension which is clearly homotopy equivalent to the original sequence $\h\to\g\to\mathfrak f$.
\end{proof}
As usual, there is a concomitant notion of an \emph{equivalence} of two $L_\infty$ extensions. In fact, we have \emph{two} natural ways to
define equivalent extensions.
\begin{defi}\
Let $e:I\to V\to U$ and $e^\prime: I\to V^\prime\to U$ be two $L_\infty$ extensions.\begin{enumerate} \item The extensions $e$ and $e^\prime$ are called \emph{equivalent} if they could be included into a commutative diagram
\[
\xymatrix
{
e:\\
e^\prime:
}
\xymatrix
{
I\ar@{=}[d]\ar[r]&V\ar^f[d]\ar[r]&U\ar@{=}[d]\\
I\ar[r]&V^\prime\ar[r]&U.
}
\]
where $f$ is an $L_\infty$ isomorphism. Since $L_\infty$ isomorphisms are invertible and their composition is again an $L_\infty$ isomorphism, this is indeed an equivalence relation. The set of equivalence classes of such extensions will be denoted by $\Ext_{L_\infty}(U,I)$.
\item
The extensions $e$ and $e^\prime$ are called \emph{freely equivalent} if they could be included into a commutative diagram
\[
\xymatrix
{
e:\\
e^\prime:
}
\xymatrix
{
I\ar^g[d]\ar[r]&V\ar^f[d]\ar[r]&U\ar@{=}[d]\\
I\ar[r]&V^\prime\ar[r]&U.
}
\]
where $f$ and $g$ are $L_\infty$ isomorphisms. As in part (1), this defines an equivalence relation.The set of equivalence classes of such extensions will be denoted by $\widetilde{\Ext}_{L_\infty}(U,I)$.
\end{enumerate}
\end{defi}

\begin{rem}
The definition of equivalence becomes transparent when one regards $V$ and $V^\prime$ as deformations with base $U$. The extensions $e$ and $e^\prime$
are equivalent if there is an isomorphism $f$ of $\hat{S}\Sigma^{-1}U^*$-linear $L_\infty$ algebras given by
\[\xymatrix{
\hat{S}\Sigma^{-1}(V^\prime)^*=\hat{S}\Sigma^{-1}(I^\prime)^*\otimes \hat{S}\Sigma^{-1}(U^\prime)^*\ar^-f[r]&
\hat{S}\Sigma^{-1}I^*\otimes \hat{S}\Sigma^{-1}U^*=\hat{S}\Sigma^{-1}V^*
}
\]
which becomes the identity map when reduced modulo the ideal $(\Sigma^{-1}U^*)$. This is, therefore, the usual notion of equivalence of formal deformations.

The definition of a free equivalence is less familiar. It could be interpreted as an isomorphism between two deformations \emph{which is not necessarily identical on the central fiber}. Therefore, two equivalent extensions must be freely equivalent, but not vice-versa.
\end{rem}
The following result shows that the functor $U\mapsto \Ext_{L_\infty}(U,I)$ is representable in the homotopy category of $L_\infty$ algebras.
\begin{theorem}\label{class1}Let $U$ and $I$ be $L_\infty$ algebras.
Then there is a natural bijection $\Ext_{L_\infty}(U,I)\cong [U,\Sigma C_{\CE}(I,I)]_{L_\infty}$ between equivalence classes of
$L_\infty$ extensions of $U$ by $I$ and homotopy classes of $L_\infty$ maps from $U$ into the dgla
$\Sigma C_{\CE}(I,I)$ supplied with the Gerstenhaber bracket.
\end{theorem}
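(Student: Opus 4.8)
The plan is to realize $L_\infty$ extensions of $U$ by $I$ as Maurer--Cartan elements of an auxiliary dgla, following the deformation-theoretic interpretation stressed in the preceding remarks. Write $A=\hat{S}\Sigma^{-1}I^*$ and $B=\hat{S}\Sigma^{-1}U^*$ for the representing cdgas, let $\mathfrak m\subset B$ be the augmentation ideal, and abbreviate $\g:=\Sigma C_{\CE}(I,I)=\Der(A)$, the dgla with the Gerstenhaber (commutator) bracket and differential $[m_I,-]$. First I would fix, for a given extension $e:I\to V\to U$, a splitting $V\cong I\oplus U$ of graded vector spaces; this identifies the representing cdga of $V$ with $A\otimes B$, made $B$-linear via the cdga map $B\to A\otimes B$ coming from $V\to U$. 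Since $V\to U$ is a strict $L_\infty$ map and $I$ is an ideal, the derivation $m_V$ must satisfy $m_V(1\otimes b)=1\otimes m_U(b)$ and $m_V\equiv m_I\otimes 1\pmod{\mathfrak m}$. Hence $m_V$ decomposes uniquely as
\[
m_V = m_I\otimes 1 + 1\otimes m_U + \phi ,
\]
where $\phi:=m_V-m_I\otimes 1-1\otimes m_U$ is $B$-linear, and $B$-linear derivations of $A\otimes B$ are exactly $\Der(A)\otimes B=\g\otimes B$. As $\phi$ vanishes modulo $\mathfrak m$, in fact $\phi\in\g\otimes\mathfrak m$, and it sits in degree $-1$.

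Next I would translate the defining conditions into the dgla $\g\otimes\mathfrak m$. The derivations $m_I\otimes 1$ and $1\otimes m_U$ are commuting, square-zero, so expanding $m_V^2=0$ collapses to
\[
[m_I\otimes 1,\phi] + (1\otimes m_U)(\phi) + \tfrac12[\phi,\phi] = 0 ,
\]
which is precisely the Maurer--Cartan equation for $\phi$ in $\g\otimes\mathfrak m$: the first two terms assemble the total differential (the $\g$-differential $[m_I,-]$ together with the $B$-differential $m_U$) and the last is the Gerstenhaber bracket. Thus extensions with the chosen splitting are in bijection with $\MC(\g\otimes\mathfrak m)$. An equivalence of extensions, being a $B$-linear $L_\infty$ isomorphism of $A\otimes B$ reducing to the identity modulo $\mathfrak m$, is exactly an element of the gauge group $\exp((\g\otimes\mathfrak m)_0)$ intertwining the two twisted differentials; so changing the splitting or passing to an equivalent extension amounts to acting by this group. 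Invoking the standard fact that in a pro-nilpotent dgla gauge equivalence coincides with the homotopy relation on Maurer--Cartan elements, this yields a natural bijection $\Ext_{L_\infty}(U,I)\cong\MCmod(\g\otimes\mathfrak m)$.

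Finally I would identify $\MCmod(\g\otimes\mathfrak m)$ with $[U,\g]_{L_\infty}$. By definition an $L_\infty$ map $U\to\g$ is a cdga map $\hat{S}\Sigma^{-1}\g^*\to B$, and by the standard correspondence between such maps (twisting cochains) and Maurer--Cartan elements of $\g\otimes\mathfrak m$ (cf. \cite{Laz}), together with the compatibility of Sullivan homotopy of $L_\infty$ maps with the homotopy relation on Maurer--Cartan elements, one gets $[U,\g]_{L_\infty}\cong\MCmod(\g\otimes\mathfrak m)$. Composing the two bijections proves the theorem, and naturality in $U$ follows since the splitting-free assignment $e\mapsto[\phi]$ and the map/Maurer--Cartan dictionary are both functorial.

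I expect the main obstacle to be the bookkeeping in the equivalence step: verifying that an equivalence in the sense of the theorem (identity on both $I$ and $U$) corresponds exactly to the gauge action, that the resulting class $[\phi]\in\MCmod(\g\otimes\mathfrak m)$ is independent of the auxiliary splitting, and that gauge equivalence matches the homotopy relation $\MCmod$ feeding into the final dictionary. The degree and sign conventions in the decomposition of $m_V$ and in the commutator-versus-Gerstenhaber bracket must also be tracked carefully, so that $\phi$ lands in degree $-1$ and the Maurer--Cartan equation emerges with the correct coefficients.
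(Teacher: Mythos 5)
Your proposal is correct and follows essentially the same route as the paper's (much terser) proof: encode the extension as an MC element of the pronilpotent dgla $\hat{S}\Sigma^{-1}U^*_+\otimes\Sigma C_{\CE}(I,I)$, identify MC elements with $L_\infty$ maps $U\to\Sigma C_{\CE}(I,I)$, and match equivalence of extensions with gauge equivalence, hence with Sullivan homotopy via the Schlessinger--Stasheff theorem. The extra detail you supply (the explicit decomposition $m_V=m_I\otimes 1+1\otimes m_U+\phi$ and the independence of the auxiliary splitting) is exactly the bookkeeping the paper leaves implicit.
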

\begin{proof}
Let $\hat{S}\Sigma^{-1}U^*$ and $\hat{S}\Sigma^{-1}I^*$ be the representing cdgas of the $L_\infty$ algebras $U$ and $I$ respectively.
The data of an $L_\infty$ extension $e:I\to V\to U$ is tantamount to specifying an $\hat{S}\Sigma^{-1}U^*$-linear differential in
$\hat{S}\Sigma^{-1}U^*\otimes \hat{S}\Sigma^{-1}I^*$ which is the same as an MC element $\xi$ in the pronilpotent dgla $\hat{S}\Sigma^{-1}U^*_+\otimes\Sigma\CE(I,I)$.
Such an MC element is, in turn, equivalent to having an $L_\infty$ map $U\to \Sigma C_{\CE}(I,I)$. Two such extensions $e$ and $e^\prime$ are equivalent,
by definition, if and only if the corresponding MC elements $\xi$ and $\xi^\prime$ are gauge equivalent which, by the Schlessinger-Stasheff
theorem (cf. \cite{SS, CL'}), is equivalent to $\xi$ and $\xi^\prime$ being Sullivan homotopic. The theorem is proved.
\end{proof}
\begin{defi}
Let $I, U, W$ be $L_\infty$ algebras, $e:I\to V\to U$ be an $L_\infty$ extension with fiber $I$ corresponding to an $L_\infty$ map $f:U\to \Sigma C_{\CE}(I,I)$ and $g:W\to U$ be an $L_\infty$ map. Then the $L_\infty$ extension $I\to ?\to W$ corresponding to the composite map $f\circ g:W\to \Sigma C_{\CE}(I,I)$ will be called the $L_\infty$ extension induced from $e$ by $g$ and it will be denoted by $g^*(e)$. The extension $e_I$ corresponding to the identity map from $\Sigma C_{\CE}(I,I)$ into itself will be called the \emph{universal} $L_\infty$ extension with fiber $I$. Any other $L_\infty$ extension with fiber $I$ is thus induced from $e_I$.
\end{defi}
\begin{rem}
The proof of Theorem \ref{class1} makes clear that the correspondence between $L_\infty$ maps into $U\to \Sigma C_{\CE}(I,I)$ and $L_\infty$ extensions of the form $I\to ?\to U$ is one-to-one on the nose rather then up to an equivalence; moreover replacing this map with a homotopic one results in having an equivalent extension.
\end{rem}
We now prove a similar result classifying $L_\infty$ extensions up to free equivalence. To this end denote, for an $L_\infty$ algebra $I$, the group of its $L_\infty$ automorphisms by $\Aut_{L_\infty}(I)$. This group is, therefore, the group of automorphisms of the complete cdga $(\hat{S}\Sigma^{-1}I^*, m_I)$ representing $I$. It is clear that $\Aut_{L_\infty}(I)$ acts on $\Sigma C_{\CE}(I,I)\cong \Der(\hat{S}\Sigma^{-1}I^*)$ by conjugations. The classification of $L_\infty$ extensions up to free equivalence could now be formulated as follows.
\begin{theorem}\label{class2}Let $U$ and $I$ be $L_\infty$ algebras. Then there is a natural bijection between the set $\widetilde{\Ext}_{L_\infty}(U,I)$ and
the quotient $[U,\Sigma C_{\CE}(I,I)]_{L_\infty}\big/\big({\Aut_{L_\infty}(I)}\big)$ where the group $\Aut_{L_\infty}(I)$ acts on $[U,\Sigma C_{\CE}(I,I)]_{L_\infty}$ through its action on $\Sigma C_{\CE}(I,I)$.
\end{theorem}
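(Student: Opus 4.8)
The plan is to deduce this from the classification up to strict equivalence in Theorem~\ref{class1} by isolating the additional freedom as exactly the action of $\Aut_{L_\infty}(I)$. Recall from Theorem~\ref{class1} and the following remark that an $L_\infty$ extension $e:I\to V\to U$ is, on the nose, the same datum as an $L_\infty$ map $f:U\to\Sigma C_{\CE}(I,I)$, and that two extensions are strictly equivalent precisely when their classifying maps are homotopic; thus $\Ext_{L_\infty}(U,I)\cong[U,\Sigma C_{\CE}(I,I)]_{L_\infty}$. It therefore suffices to show that passing from strict to free equivalence corresponds to quotienting the right-hand side by the $\Aut_{L_\infty}(I)$-action.

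First I would make the relabeling action explicit. Given $\phi\in\Aut_{L_\infty}(I)$, viewed as an automorphism of the representing cdga $(\hat{S}\Sigma^{-1}I^*,m_I)$, and an extension $e$ realized by an $\hat{S}\Sigma^{-1}U^*$-linear differential $D$ on $\hat{S}\Sigma^{-1}V^*=\hat{S}\Sigma^{-1}U^*\otimes\hat{S}\Sigma^{-1}I^*$, I set $\phi\cdot e$ to be the extension with differential $(\id\otimes\phi)\,D\,(\id\otimes\phi)^{-1}$. Since $\id\otimes\phi$ is $\hat{S}\Sigma^{-1}U^*$-linear and restricts to $\phi$ modulo the ideal $(\Sigma^{-1}U^*)$, this is again an extension of $U$ by $I$, and $e\mapsto\phi\cdot e$ defines an action of $\Aut_{L_\infty}(I)$ on extensions.

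The key step is to identify this action with the conjugation action on $\Sigma C_{\CE}(I,I)\cong\Der(\hat{S}\Sigma^{-1}I^*)$. Writing $e$ as an MC element $\xi$ in the pronilpotent dgla $\hat{S}\Sigma^{-1}U^*_+\otimes\Sigma C_{\CE}(I,I)$ as in the proof of Theorem~\ref{class1}, conjugating $D$ by $\id\otimes\phi$ carries $\xi$ to $(\id\otimes\phi_*)\xi$, where $\phi_*=\phi(-)\phi^{-1}$ is precisely the conjugation action of $\phi$ on $\Der(\hat{S}\Sigma^{-1}I^*)$; this $\phi_*$ is a dgla automorphism because $\phi$ commutes with $m_I$. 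Hence $\phi\cdot e$ is classified by $\phi_*\circ f$, which is the $\Aut_{L_\infty}(I)$-action of the statement, and since $\phi_*$ is a dgla automorphism it preserves Sullivan homotopy and so descends to an action on $[U,\Sigma C_{\CE}(I,I)]_{L_\infty}$.

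Finally I would show that $e$ and $e'$ are freely equivalent if and only if $e$ is strictly equivalent to $\phi\cdot e'$ for some $\phi\in\Aut_{L_\infty}(I)$. Given a free equivalence $(f,g)$, the fiber isomorphism $g$ is represented by some $\phi=\phi_g\in\Aut_{L_\infty}(I)$; relabeling $e'$ by $\phi$ absorbs $g$ and leaves a commutative diagram whose fiber map is the identity, i.e.\ a strict equivalence. Conversely, composing a strict equivalence with the relabeling isomorphism $\id\otimes\phi$ yields a free equivalence. The precise variance ($\phi$ versus $\phi^{-1}$) is immaterial since we quotient by the whole group. Combining with the previous step, $e$ and $e'$ are freely equivalent exactly when their classifying maps agree in $[U,\Sigma C_{\CE}(I,I)]_{L_\infty}$ after applying some $\phi_*$, i.e.\ when they represent the same point of the quotient $[U,\Sigma C_{\CE}(I,I)]_{L_\infty}\big/\big(\Aut_{L_\infty}(I)\big)$, which gives the asserted natural bijection. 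The main obstacle is the key step: verifying that relabeling the central fiber turns the classifying MC element into its conjugate and that this passes correctly through the Schlessinger-Stasheff identification of gauge equivalence with Sullivan homotopy, so that the quotient relation matches free equivalence on the nose rather than some coarser relation.
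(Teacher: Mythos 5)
Your proposal is correct and follows essentially the same route as the paper: the paper also interprets extensions as MC elements in $\hat{S}\Sigma^{-1}U^*_+\otimes\Sigma C_{\CE}(I,I)$, identifies the gauge group $\tilde{G}$ for free equivalence as a (split) extension of $\Aut_{L_\infty}(I)$ by the strict-equivalence gauge group $G$, and computes the quotient in two stages — which is exactly your decomposition of a free equivalence into a relabeling by $\id\otimes\phi$ followed by a strict equivalence, with the relabeling acting by conjugation on $\Der(\hat{S}\Sigma^{-1}I^*)$.
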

\begin{proof}
Again, we interpret $L_\infty$ extensions $I\to V\to U$ as MC elements in the dgla $\hat{S}\Sigma^{-1}U^*_+\otimes \Sigma C_{\CE}(I,I)$. The difference is that the gauge group is bigger in this case. Namely, denote by $G\subset\Aut(\hat{S}\Sigma^{-1}U^*\otimes \hat{S}\Sigma^{-1}I^*)\cong \Aut(\hat{S}\Sigma^{-1}(U^*\oplus I^*))$ the subgroup of those automorphisms which fix $U^*$ and which are the identity modulo the ideal generated by $U^*$. Then $G$ is the pronilpotent group associated with the pronilpotent Lie algebra $[\hat{S}\Sigma^{-1}U^*_+\otimes \Sigma C_{\CE}(I,I)]_0$ and the action of this group on $\hat{S}\Sigma^{-1}U^*_+\otimes \Sigma C_{\CE}(I,I)$ identifies equivalent $L_\infty$ extensions. The free equivalence of $L_\infty$ extensions corresponds to the action of the group $\tilde{G}\subset \Aut(\hat{S}\Sigma^{-1}(U^*\oplus I^*))$ consisting of those automorphisms $g$ which still fix $U^*$ and such that the composite map
\[
\xymatrix{
\hat{S}\Sigma^{-1}I^*\ar@{^{(}->}[r]&\hat{S}\Sigma^{-1}(U^*\oplus I^*)\ar^g[r]&\hat{S}\Sigma^{-1}(U^*\oplus I^*)\ar^-{\mod U^*}[r]&
\hat{S}\Sigma^{-1}(I^*)
}
\]
is an automorphism of $\hat{S}\Sigma^{-1}(I^*)$. If this automorphism is the identity then the corresponding automorphism belongs to the group $G$. Therefore, we have a (split) extension of groups $\Aut_{L_\infty}(I)\to \tilde{G}\to G$. We have:
\begin{align*}
\widetilde{\Ext}_{L_\infty}(U,I)&\cong\MC(\hat{S}\Sigma^{-1}U^*_+\otimes \Sigma C_{\CE}(I,I))\big/\tilde{G}\\
&\cong [\MC(\hat{S}\Sigma^{-1}U^*_+\otimes \Sigma C_{\CE}(I,I))/{G}]\big/\big(\Aut_{L_\infty}(I)\big)\\
&\cong[\Ext_{L_\infty}(U,I)]\big/\big(\Aut_{L_\infty}(I)\big)\\
&\cong[U,\Sigma C_{\CE}(I,I)]_{L_\infty}\big/\big(\Aut_{L_\infty}(I)\big)
\end{align*}
as required.
\end{proof}
\begin{rem}Since the group $\Aut_{L_\infty}(I)$ is not nilpotent in general (e.g. when the $L_\infty$ structure on $I$ is the trivial one), we cannot use the Schlessinger-Stasheff theorem to interpret $\widetilde{\Ext}_{\L_\infty}(U,I)$ in terms of homotopy classes of MC elements. However, we will see later that in the context of rational homotopy theory there is, in fact, such an interpretation via homotopies of maps of unpointed spaces.
\end{rem}
\subsection{Comparison with the classical notion} We will now give a comparison of our classification result with the standard interpretation of Lie algebra extensions in terms of CE cohomology. Suppose that $U$ is an (ungraded) Lie algebra and $I$ is a $U$-module viewed as an abelian Lie algebra. An extension of $U$ by $I$ is a short exact sequence of Lie algebras of the form $e:I\to V\to U$ such that the action of the quotient Lie algebra $V/I\cong U$ on $I$ is the given one.
Two such extensions $e$ and $e^\prime$ are equivalent if there is a map $e\to e^\prime$ which is the identity map on $I$ and $U$.

To see how this is a special case of an $L_\infty$ extension consider the graded Lie algebra $\Sigma C_{\CE}(I,I)=\Der(\hat{S}\Sigma^{-1}I^*)$ and its subalgebra
$\g$ consisting of constant and linear derivations. Thus, as a graded vector space, $\g$ is isomorphic to $\Sigma I\oplus\Hom(I,I)$. The Lie bracket is zero on $\Sigma I$; on $\Hom(I,I)$ it is the usual commutator of endomorphisms, and for $g\in\Hom(I,I),v\in\Sigma I$ we have $[g,v]=g(v)$. Thus, $\g$ is just the Lie algebra of affine transformations of $\Sigma I$. Then an extension $e:I\to V\to U$ can be viewed as an $L_\infty$ extension whose classifying $L_\infty$ map $f:U\to \Sigma C_{\CE}(I,I)$ lands inside the subalgebra $\g$. Such a map $f$ has two components: $f_1:U\to\g$ and $f_2:U\otimes U\to\g$.
Dimensional considerations show that the images of $f_1$ and $f_2$ lie inside $\Hom(I,I)$ and $I$ respectively. Moreover, $f_1$ is a Lie algebra map and it specifies the action of $U$ on $I$ whereas $f_2$ is the cocycle classifying the extension $e$. Then, the homotopy classes of $L_\infty$ maps $U\to \g$ for which $f_1$ (the action) is fixed are precisely 2-dimensional cohomology of $U$ with coefficients in $I$, in agreement with the standard classification.

Note that to obtain such a simple homological interpretation of classical Lie algebra extensions we need the kernel $I$ to be abelian; otherwise the Lie algebra $\Sigma C_{\CE}(I,I)$ supports a non-zero (CE) differential and its subalgebra $\g$ is \emph{not closed} with respect to it. Therefore, it makes no sense to consider $L_\infty$ maps $U\to \g$. We see that the problem of classifying Lie algebra extensions with a \emph{non-abelian} kernel in homological terms forces one to extend the notion of extension in the $L_\infty$ direction.

\section{Universal extensions: examples}
Given an $L_\infty$ algebra $I$, we saw that there is a universal $L_\infty$ extension from which all other extensions having $I$ as the fiber are induced. We now examine more closely this and other, closely related extensions. Recall from Remark \ref{fibs} that there is a homotopy fiber sequence of $L_\infty$ algebras:
\begin{equation}\label{fibCE}
I\to \Sigma\overline{C}_{\CE}(I,I)\to \Sigma C_{\CE}(I,I).
\end{equation}
 It turns out that the sequence (\ref{fibCE}) is, essentially, the universal extension  with fiber $I$.
\begin{theorem}\label{univext}
Let $e:I\to V\to \Sigma C_{\CE}(I,I)$ be the universal $L_\infty$ extension with fiber $I$ corresponding to the identity map on $\Sigma C_{\CE}(I,I)$. Then $e$ is homotopy equivalent to the sequence (\ref{fibCE}).
\end{theorem}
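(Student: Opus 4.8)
The plan is to reduce the statement to the classification Theorem \ref{class1} by computing the homotopy class of $L_\infty$ maps that classifies the sequence (\ref{fibCE}). By Remark \ref{fibs}, the sequence (\ref{fibCE}) is a homotopy fiber sequence of $L_\infty$ algebras with fiber $I$ and base $\Sigma C_{\CE}(I,I)$; by Proposition \ref{fibseq} it is therefore homotopy equivalent to an honest $L_\infty$ extension $e'\colon I\to V'\to\Sigma C_{\CE}(I,I)$, and by Theorem \ref{class1} this $e'$ is classified by some $\phi\in[\Sigma C_{\CE}(I,I),\Sigma C_{\CE}(I,I)]_{L_\infty}$. The universal extension $e$ is classified by $\id$, and by the remark following Theorem \ref{class1} the classification is one-to-one with homotopic maps yielding equivalent extensions; hence it suffices to prove that $\phi$ is homotopic to $\id$.

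To set up the computation I would write $\g:=\Sigma C_{\CE}(I,I)=\Der(\hat{S}\Sigma^{-1}I^*)$ and make the universal extension explicit. Following the proof of Theorem \ref{class1}, the identity map $\g\to\g$ corresponds to the tautological Maurer--Cartan element $\xi\in\hat{S}\Sigma^{-1}\g^*_+\otimes\g$, the canonical element encoding $\id_\g$, which is linear in the base coordinates. Since $\g$ is literally the Lie algebra of derivations of $\hat{S}\Sigma^{-1}I^*$, this $\xi$ is a $\hat{S}\Sigma^{-1}\g^*$-linear derivation of $\hat{S}\Sigma^{-1}\g^*\otimes\hat{S}\Sigma^{-1}I^*$, and the representing cdga of the total space $V$ of $e$ is this completed tensor product with its differential twisted by $\xi$.

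The key step is then a contractibility computation identifying $V$ with $\Sigma\overline{C}_{\CE}(I,I)$. I would use the vector-space splitting $\Der=\overline{\Der}\oplus(\text{constant derivations})$ together with the canonical isomorphism between the space of constant derivations and $\Sigma I$. Dually this presents the generators $\Sigma^{-1}\g^*\oplus\Sigma^{-1}I^*$ of the total algebra as $\Sigma^{-1}\overline{\Der}^*$ adjoined with the pair $\Sigma^{-2}I^*\oplus\Sigma^{-1}I^*$. The tautological nature of $\xi$ forces the twisted differential to carry the fiber generators $\Sigma^{-1}I^*$ isomorphically (modulo terms of higher polynomial degree) onto the constant-derivation generators $\Sigma^{-2}I^*$; hence the pair $\Sigma^{-2}I^*\oplus\Sigma^{-1}I^*$ is an acyclic summand. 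Splitting it off produces a quasi-isomorphism from the representing cdga of $V$ onto $\hat{S}\Sigma^{-1}\overline{\Der}^*$, the representing cdga of $\Sigma\overline{C}_{\CE}(I,I)$, under which the structure map $V\to\g$ becomes the inclusion $\overline{\Der}\hookrightarrow\Der$. This exhibits $e$ as the sequence (\ref{fibCE}) and so shows $\phi\simeq\id$.

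The main obstacle is exactly this contractibility step. One must verify that the tautological twisting really pairs each constant-derivation direction with the corresponding fiber direction, and control the higher polynomial-degree corrections to $\xi$ well enough to perform the change of variables that splits off the acyclic pair. One then has to check that the map induced on the surviving generators is genuinely the inclusion $\overline{\Der}\hookrightarrow\Der$ rather than merely an abstract quasi-isomorphism, so that the identification respects the maps to the common base $\Sigma C_{\CE}(I,I)$; the equivalence of extensions then follows from the one-to-one correspondence of Theorem \ref{class1}.
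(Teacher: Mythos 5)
Your proposal is correct and follows essentially the same route as the paper: both identify the total space of the universal extension as $I\oplus\Sigma I\oplus\overline{\Der}(\hat{S}\Sigma^{-1}I^*)$ and observe that the tautological twisting pairs the fiber $I$ acyclically with the constant-derivation summand $\Sigma I$, leaving $\Sigma\overline{C}_{\CE}(I,I)$ as the surviving quasi-isomorphic piece mapping to the base by the inclusion $\overline{\Der}\hookrightarrow\Der$. The only cosmetic difference is that you work dually on representing cdgas and split off the acyclic pair by a change of variables, whereas the paper works on the underlying graded vector space and simply checks that $\Sigma\overline{C}_{\CE}(I,I)$ is an $L_\infty$ subalgebra included quasi-isomorphically.
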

\begin{proof}
The underlying vector space for the $L_\infty$ algebra $V$ is
\begin{align*}
V&\cong I\oplus \Sigma C_{\CE}(I,I)\\
&\cong I\oplus\Hom(\Sigma^{-1}I^*,\ground)\oplus\Hom(\Sigma^{-1}I^*,\Sigma^{-1}I^*)\oplus\Hom(\Sigma^{-1}I^*,S^2\Sigma^{-1}I^*)\oplus\ldots\\
&\cong I\oplus\Sigma I\oplus\Hom(\Sigma I,\Sigma I)\oplus \Hom(S^2\Sigma I,\Sigma I)\oplus\ldots.
\end{align*} Inspection shows that the operation $m_1:V\to V$ is only nonzero when applied to the subspace $\Sigma I$, and it maps it isomorphically onto $I$. Further, the subspace
\[\Hom(\Sigma I,\Sigma I)\oplus \Hom(S^2\Sigma I,\Sigma I)\oplus\ldots\subset V\]
is isomorphic to $\overline{C}_{\CE}(I,I)$ and is an $L_\infty$ subalgebra in $V$. It follows that the (strict) $L_\infty$ inclusion
$\overline{C}_{\CE}(I,I)\hookrightarrow V$ is a quasi-isomorphism and thus, $V$ is $L_\infty$ quasi-isomorphic to $\overline{C}_{\CE}(I,I)$. The desired conclusion follows.
\end{proof}
\begin{rem}
Theorem \ref{univext} is an algebraic analogue of the well-known result of Gottlieb \cite{Got} stating that the total space $E$ of the universal fibration $F\to E\to\Baut(F)$ classifying fibrations with a given fiber $F$ is itself (homotopy equivalent to) $\Baut_*(F)$, the classifying space of basepoint-preserving self-homotopy equivalences of $F$. This is, of course, more than an analogy: we will see that rational homotopy theory transforms the statement of Theorem \ref{univext} into Gottlieb's result.
\end{rem}
In light with the analogy with Gottlieb's result it is natural to ask whether, for a given $L_\infty$ algebra $I$, the dgla $\Sigma\overline{C}_{\CE}(I,I)$ also classifies a certain extension problem (note that the space $\Baut_*(F)$ classifies fibrations with fiber $F$ and a given section). It turns out that $\Sigma\overline{C}_{\CE}(I,I)$ classifies \emph{split} $L_\infty$ extensions, that is to say, $L_\infty$ extensions having a \emph{section}.
\begin{defi}
Let $e:I\to V\to U$ be an $L_\infty$ extension. We say that the extension $e$ is \emph{split} if there is a splitting $V\cong I\oplus U$ making $U$ an $L_\infty$ subalgebra of $U$. To emphasize the section we will write the diagram of $e$ as $\xymatrix{I\ar[r]& V\ar[r]&U\ar@/_/[l]}$
\end{defi}
\begin{rem}
Repeating the arguments of the proof of Proposition \ref{fibseq} (with obvious modifications) we see that a split $L_\infty$ extension can be viewed as a homotopy fiber sequence of $L_\infty$ algebras having a section and conversely, any homotopy fiber sequence of $L_\infty$ algebras with a section gives rise to a split $L_\infty$ extension.
\end{rem}
There are obvious notions of equivalence of split extensions.
\begin{defi}\
Let $e:\xymatrix{I\ar[r]& V\ar[r]&U\ar@/_/[l]}$ and $e^\prime: \xymatrix{I\ar[r]& V^\prime\ar[r]&U\ar@/_/[l]}$ be two split $L_\infty$ extensions.\begin{enumerate} \item The split extensions $e$ and $e^\prime$ are called \emph{equivalent} if they could be included into a commutative diagram
\[
\xymatrix
{
e:\\
e^\prime:
}
\xymatrix
{
I\ar@{=}[d]\ar[r]&V\ar^f[d]\ar[r]&U\ar@{=}[d]\ar@/_/[l]\\
I\ar[r]&V^\prime\ar[r]&U\ar@/_/[l].
}
\]
where $f$ is an $L_\infty$ isomorphism. The set of equivalence classes of such extensions will be denoted by $\Ext^s_{L_\infty}(U,I)$.
\item
The extensions $e$ and $e^\prime$ are called \emph{freely equivalent} if they could be included into a commutative diagram
\[
\xymatrix
{
e:\\
e^\prime:
}
\xymatrix
{
I\ar^g[d]\ar[r]&V\ar^f[d]\ar[r]&U\ar@{=}[d]\ar@/_/[l]\\
I\ar[r]&V^\prime\ar[r]&U\ar@/_/[l].
}
\]
where $f$ and $g$ are $L_\infty$ isomorphisms. The set of equivalence classes of such extensions will be denoted by $\widetilde{\Ext}^s_{L_\infty}(U,I)$.
\end{enumerate}
\end{defi}
We have the following analogue of Theorems \ref{class1}; the proof is similar, but takes into account the splitting datum.
\begin{theorem}\label{classs1}Let $U$ and $I$ be $L_\infty$ algebras.
Then there is a natural bijection $\Ext^s_{L_\infty}(U,I)\cong [U,\Sigma\overline{C}_{\CE}(I,I)]_{L_\infty}$ between equivalence classes of
split $L_\infty$ extensions of $U$ by $I$ and homotopy classes of $L_\infty$ maps from $U$ into the dgla
$\Sigma\overline{C}_{\CE}(I,I)$ supplied with the Gerstenhaber bracket.
\end{theorem}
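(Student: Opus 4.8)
The plan is to mimic the proof of Theorem \ref{class1}, keeping track of the extra datum of a section. Recall that there the data of an $L_\infty$ extension $I\to V\to U$ was encoded as an MC element $\xi$ in the pronilpotent dgla $\hat{S}\Sigma^{-1}U^*_+\otimes\Sigma C_{\CE}(I,I)$, namely the $\hat{S}\Sigma^{-1}U^*$-linear twist of the differential on $\hat{S}\Sigma^{-1}U^*\otimes\hat{S}\Sigma^{-1}I^*$. The first step is to observe that, as graded vector spaces, $\Sigma C_{\CE}(I,I)=\Der(\hat{S}\Sigma^{-1}I^*)$ splits as the direct sum of $\Sigma\overline{C}_{\CE}(I,I)=\overline{\Der}(\hat{S}\Sigma^{-1}I^*)$ and the subspace $\Sigma I$ of constant-term (order-zero) derivations; here $\Sigma\overline{C}_{\CE}(I,I)$ is a sub-dgla while the constant-term part is not. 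This is exactly the decomposition appearing in the proof of Theorem \ref{univext}.

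The key point is to match the splitting datum with the vanishing of the constant-term component of $\xi$. Taking the splitting $V\cong I\oplus U$ as part of the data, consider the canonical algebra projection $\pi\colon\hat{S}\Sigma^{-1}V^*=\hat{S}\Sigma^{-1}I^*\otimes\hat{S}\Sigma^{-1}U^*\to\hat{S}\Sigma^{-1}U^*$ that kills the generators $\Sigma^{-1}I^*$; this $\pi$ is the representing map of the putative section $U\to V$. A short computation shows that for a generator $x\in\Sigma^{-1}I^*$ the element $\pi(d_V x)$ equals the component of $\xi(x)$ produced by the constant-term derivations, which lies in $\hat{S}\Sigma^{-1}U^*_+$, whereas $d_U(\pi x)=0$. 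Hence $\pi$ is a cdga map --- equivalently, $U$ is a (strict) $L_\infty$ subalgebra of $V$ and the extension is split --- if and only if the component of $\xi$ in $\hat{S}\Sigma^{-1}U^*_+\otimes\Sigma I$ vanishes, i.e. $\xi$ lies in the sub-dgla $\hat{S}\Sigma^{-1}U^*_+\otimes\Sigma\overline{C}_{\CE}(I,I)$. Since this is a sub-dgla, an MC element of it is the same as an MC element of the ambient dgla lying inside it, so split extensions correspond bijectively, on the nose, to such restricted MC elements, hence to $L_\infty$ maps $U\to\Sigma\overline{C}_{\CE}(I,I)$.

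It remains to match the equivalence relations. Following the proof of Theorem \ref{class1}, equivalences of plain extensions fixing $I$ and $U$ are the orbits of the pronilpotent group $G$ attached to $[\hat{S}\Sigma^{-1}U^*_+\otimes\Sigma C_{\CE}(I,I)]_0$. For split extensions the equivalence $f$ is additionally required to carry one section to the other, i.e. to commute with the projections $\pi$; repeating the computation of the previous step at the level of gauge transformations shows that this section-preserving condition cuts $G$ down precisely to the pronilpotent group attached to $[\hat{S}\Sigma^{-1}U^*_+\otimes\Sigma\overline{C}_{\CE}(I,I)]_0$. Thus $\Ext^s_{L_\infty}(U,I)$ is the set of MC elements of $\hat{S}\Sigma^{-1}U^*_+\otimes\Sigma\overline{C}_{\CE}(I,I)$ modulo gauge, and the Schlessinger--Stasheff theorem identifies gauge equivalence with Sullivan homotopy, giving $\Ext^s_{L_\infty}(U,I)\cong[U,\Sigma\overline{C}_{\CE}(I,I)]_{L_\infty}$.

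I expect the main obstacle to be the bookkeeping in this last paragraph: one must verify that the section-preserving constraint on the gauge transformations corresponds exactly to discarding the constant-derivation direction, both so that the restricted action stays within the sub-dgla and so that no split extensions are inadvertently identified or separated. In particular one should check the compatibility of the strict-section requirement in the definition of a split extension with the up-to-homotopy nature of the Schlessinger--Stasheff identification, so that the MC-modulo-gauge quotient closes up to precisely $[U,\Sigma\overline{C}_{\CE}(I,I)]_{L_\infty}$ rather than to some larger or smaller set.
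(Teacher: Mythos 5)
Your proposal is correct and follows essentially the same route as the paper's proof: identify a split extension with an MC element of $\hat{S}\Sigma^{-1}U^*_+\otimes\Sigma C_{\CE}(I,I)$ whose constant-derivation component vanishes (equivalently, an MC element of the sub-dgla $\hat{S}\Sigma^{-1}U^*_+\otimes\Sigma\overline{C}_{\CE}(I,I)$), then match split-equivalence with gauge equivalence there and invoke Schlessinger--Stasheff. Your explicit verification that the section-preserving constraint cuts the gauge group down to the one attached to $[\hat{S}\Sigma^{-1}U^*_+\otimes\Sigma\overline{C}_{\CE}(I,I)]_0$ is a detail the paper leaves implicit, but it is the right bookkeeping and nothing in it fails.
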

\begin{proof}
Let $\hat{S}\Sigma^{-1}U^*$ and $\hat{S}\Sigma^{-1}I^*$ be the representing cdgas of the $L_\infty$ algebras $U$ and $I$ respectively.
The data of a split $L_\infty$ extension $e:I\to V\to U$ is equivalent to specifying an $\hat{S}\Sigma^{-1}U^*$-linear differential $m_V$ in
$\hat{S}\Sigma^{-1}U^*\otimes \hat{S}\Sigma^{-1}I^*$ such that the inclusion of dg spaces $U\to V$ is
an $L_\infty$ map. That means that the quotient map
\[
\hat{S}\Sigma^{-1}V^*\cong\hat{S}\Sigma^{-1}U^*\otimes \hat{S}\Sigma^{-1}I^*\to (\hat{S}\Sigma^{-1}U^*\otimes \hat{S}\Sigma^{-1}I^*)/(I^*)\cong\hat{S}\Sigma^{-1}U^*
\]
is a cdga map. This happens if and only if the image of $I$ under $m_V$ lies in the ideal generated by $I$. Therefore the associated
MC element $\xi$ in the pronilpotent dgla $\hat{S}\Sigma^{-1}U^*_+\otimes \Sigma C_{\CE}(I,I)$ actually lies in $\hat{S}\Sigma^{-1}U^*_+\otimes\Sigma\overline{\CE}(I,I)$. Such an MC element is, in turn, equivalent to having an $L_\infty$
map $U\to \Sigma\overline{C}_{\CE}(I,I)$.

Two such split extensions $e$ and $e^\prime$ are equivalent (as split extensions) if and only if the corresponding MC elements $\xi,\xi^\prime\in\MC(\hat{S}\Sigma^{-1}U^*_+\otimes\Sigma\overline{C}_{\CE}(I,I))$ are gauge equivalent which, by the Schlessinger-Stasheff
theorem, is equivalent to $\xi$ and $\xi^\prime$ being Sullivan homotopic. The theorem is proved.

\end{proof}
Note that the the action of $\Aut_{L_\infty}(I)$ on $C_{\CE}(I,I)$ restricts to an action on $\overline{C}_{\CE}(I,I)$ and there is an analogue
of Theorem \ref{class2}. The only modification of the proof is replacing $C_{\CE}(I,I)$ with $\overline{C}_{\CE}(I,I)$.
\begin{theorem}\label{classs2}Let $U$ and $I$ be $L_\infty$ algebras. Then there is a natural bijection between the set $\widetilde{\Ext}^s_{L_\infty}(U,I)$ and
the quotient $[U,\Sigma \overline{C}_{\CE}(I,I)]_{L_\infty}\big/\big({\Aut_{L_\infty}(I)}\big)$ where the group $\Aut_{L_\infty}(I)$ acts on $[U,\Sigma C_{\CE}(I,I)]_{L_\infty}$ through its action on $\Sigma C_{\CE}(I,I)$.
\end{theorem}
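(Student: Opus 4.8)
The plan is to repeat the proof of Theorem~\ref{class2} essentially word for word, with $\Sigma C_{\CE}(I,I)$ replaced throughout by its truncated subcomplex $\Sigma\overline{C}_{\CE}(I,I)$, and with the splitting datum tracked exactly as in the proof of Theorem~\ref{classs1}. Recall from the latter that a \emph{split} $L_\infty$ extension $I\to V\to U$ is the same thing as an MC element $\xi$ of the pronilpotent dgla $\hat{S}\Sigma^{-1}U^*_+\otimes\Sigma\overline{C}_{\CE}(I,I)$: the existence of the section forces the associated $\hat{S}\Sigma^{-1}U^*$-linear differential to carry $I$ into the ideal it generates, which is precisely the condition that $\xi$ have vanishing constant term in the $C_{\CE}(I,I)$-variable. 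Thus $\widetilde{\Ext}^s_{L_\infty}(U,I)$ is the quotient of $\MC(\hat{S}\Sigma^{-1}U^*_+\otimes\Sigma\overline{C}_{\CE}(I,I))$ by the free-equivalence gauge group.

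First I would check that the two gauge groups $G$ and $\tilde{G}$ from the proof of Theorem~\ref{class2} restrict to the truncated complex. The group $G$, consisting of automorphisms fixing $U^*$ and equal to the identity modulo the ideal $(U^*)$, preserves the subspace of split MC elements, and by Theorem~\ref{classs1} the resulting quotient is $\Ext^s_{L_\infty}(U,I)\cong[U,\Sigma\overline{C}_{\CE}(I,I)]_{L_\infty}$. For the larger group $\tilde{G}$ the essential point is that $\Aut_{L_\infty}(I)$ genuinely acts on $\overline{C}_{\CE}(I,I)$. Indeed, an $L_\infty$ automorphism of $I$ is an automorphism $\phi$ of the augmented cdga $\hat{S}\Sigma^{-1}I^*$, and as such it preserves the augmentation ideal $\mathfrak m$; since a derivation has vanishing constant term exactly when it preserves $\mathfrak m$, the conjugation $D\mapsto\phi D\phi^{-1}$ carries $\overline{\Der}(\hat{S}\Sigma^{-1}I^*)$ into itself. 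Consequently the (split) group extension $\Aut_{L_\infty}(I)\to\tilde{G}\to G$ restricts to an action of $\tilde{G}$ on the truncated MC set.

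Granting these two observations, the chain of natural bijections in the proof of Theorem~\ref{class2} goes through unchanged:
\begin{align*}
\widetilde{\Ext}^s_{L_\infty}(U,I)&\cong\MC(\hat{S}\Sigma^{-1}U^*_+\otimes\Sigma\overline{C}_{\CE}(I,I))\big/\tilde{G}\\
&\cong\left[\MC(\hat{S}\Sigma^{-1}U^*_+\otimes\Sigma\overline{C}_{\CE}(I,I))\big/G\right]\big/\big(\Aut_{L_\infty}(I)\big)\\
&\cong\Ext^s_{L_\infty}(U,I)\big/\big(\Aut_{L_\infty}(I)\big)\\
&\cong[U,\Sigma\overline{C}_{\CE}(I,I)]_{L_\infty}\big/\big(\Aut_{L_\infty}(I)\big).
\end{align*}
I expect the only delicate step to be the stability of the vanishing-constant-term condition under the conjugation action of $\tilde{G}$, i.e.\ confirming that passing to free equivalences does not escape the class of split extensions; once $\phi$ is observed to preserve $\mathfrak m$ this is immediate, and everything else is the bookkeeping already done for Theorems~\ref{class2} and \ref{classs1}.
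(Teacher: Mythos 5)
Your proposal is correct and follows exactly the route the paper intends: the paper leaves Theorem~\ref{classs2} unproved, remarking only that the action of $\Aut_{L_\infty}(I)$ restricts to $\overline{C}_{\CE}(I,I)$ and that the sole modification to the proof of Theorem~\ref{class2} is replacing $C_{\CE}(I,I)$ by $\overline{C}_{\CE}(I,I)$, which is precisely what you carry out. Your added verification that conjugation by an automorphism preserving the augmentation ideal stabilizes the derivations with vanishing constant term is the right justification for the restricted action.
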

\noproof
\begin{rem}\label{higherCE}The passage from $C_{\CE}(I,I)$ to $\overline{C}_{\CE}(I,I)$ can be iterated. To this end let us redenote $C_{\CE}(I,I)$ by $C_{\CE}[0](I,I)$ and $\overline{C}_{\CE}(I,I)$ by $C_{\CE}[1](I,I)$. Further, denote the total space of the universal extension over $\Sigma C_{\CE}[1](I,I)=\Sigma\overline{C}_{\CE}(I,I)$ by $\Sigma C_{\CE}[2](I,I)$ so that we have an extension $e_1:\xymatrix{I\ar[r]&\Sigma\CE[2](I,I)\ar^{f_1}[r]&\Sigma C_{\CE}[1](I,I)}$. By induction we construct the extensions $e_3,e_4,\ldots$ so that
\[e_n=f_1^*(e_{n-1}):\xymatrix{I\ar[r]&\Sigma C_{\CE}[n+1](I,I)\ar^{f_n}[r]&\Sigma C_{\CE}[n](I,I)}.\]
The $L_\infty$ algebra $\Sigma C_{\CE}[2](I,I)$ was introduced (in the case when the $L_\infty$ structure on $I$ is trivial) in \cite{CL'}; it is characterized by the property that, roughly speaking, the MC elements in $\Sigma\CE[2](I,I)$ are $L_\infty$ structures on the dg vector space $I$ together with an MC element in it. It is not hard to prove that $\Sigma C_{\CE}[n](I,I)$ classifies $L_\infty$ extensions with fiber $I$ having $n$ sections up to an appropriate equivalence relation.

The topological analogue of $\Sigma C_{\CE}[n](I,I)$ is the classifying space of fibrations with typical fiber $F$ and having $n$ marked points; such a construction is reminiscent of moduli spaces of Riemann surfaces with marked points.
\end{rem}
\subsection{The point of view of deformation theory} As was already mentioned, the notion of an $L_\infty$ extension is essentially equivalent to that
of a deformation with a dg base. Let us now make this precise. Let $A$ be a complete unital cdga, $I$ be a vector space and $m\in\MC\big(\overline{\Der}(\hat{S}\Sigma^{-1}I^*)\big)$ be an $L_\infty$ algebra structure on $I$. Consider the
space \[{\Der}^A(A\otimes\hat{S}\Sigma^{-1}I^*)\subset \Der(A\otimes\hat{S}\Sigma^{-1}I^*)\] consisting of derivations $\xi$ such that:
\begin{enumerate} \item $\xi|_{A\otimes 1}=0$ \item $\xi|_{1\otimes\hat{S}\Sigma^{-1}I^*}=0\mod(A_+)$
\end{enumerate}
 Then ${\Der}^A(A\otimes\hat{S}\Sigma^{-1}I^*)$ is a dgla with the differential induced by that
on $I$ and the commutator bracket. Further, reduction modulo the ideal $A_+\subset A$ gives rise to a dgla map \[p:{\Der}^A(A\otimes\hat{S}\Sigma^{-1}I^*)\to\overline{\Der}(\hat{S}\Sigma^{-1}I^*).\]
Next, let \[G^A\subset\Aut(A\otimes\hat{S}\Sigma^{-1}I^*)\] be the subgroup of automorphisms $g$ of the complete graded algebra $A\otimes\hat{S}\Sigma^{-1}I^*$ such that
\begin{enumerate}\item $g|_{A\otimes 1}=\id$ \item $g|_{1\otimes\hat{S}\Sigma^{-1}I^*}=\id\mod(A_+)$.\end{enumerate}
\begin{defi}
A deformation of the $L_\infty$ algebra $(I,m)$ over $A$ is an element $m^A\in\MC\big(\Der^A(A\otimes\hat{S}\Sigma^{-1}I^*)\big)$ such that $p_*(m^A)=m$. The group $G^A$ acts on the set of such deformations and two deformations belonging to the same $G^A$-orbit are called \emph{equivalent}. The functor associating to a complete cdga $A$ the set of equivalence classes of deformation of $I$ will be denoted by $\defo_I$.
\end{defi}
It is clear that in the case when $A$ is a representing cdga of an $L_\infty$ algebra: $A=\hat{S}\Sigma^{-1}U^*$ a deformation of $I$ with base $A$ is precisely the same as an $L_\infty$ extension $I\to V\to U$ possessing an $L_\infty$ section $U\to V$ where there is a decomposition of vector spaces $V=I\oplus U$. Note that in the absence of such a section the $A$-linear derivation of $\hat{S}\Sigma^{-1}V^*$ determining an $L_\infty$ structure should be viewed as a deformation of $I$ as a \emph{curved} $L_\infty$ algebra.  Moreover, the notion of equivalence of deformations corresponds to the equivalence of extensions. In other words, we have the following natural bijection:
\[
\defo_I\cong\Ext^s_{L_\infty}(-,I).
\]
Next, the set of equivalence classes of deformations of $I$ over $A$ is clearly in one-to-one correspondence with $\MCmod(\Sigma\overline{C}_{\CE}(I,I),A)$, the MC moduli set of the dgla $\Sigma\overline{C}_{\CE}(I,I)$ with values in $A$. Such a set is bijective with the set of homotopy classes of maps of complete
cdgas $[C_{\CE}\big(\Sigma\overline{C}_{\CE}(I,I)\big),A]$ where $C_{\CE}\big(\Sigma\overline{C}_{\CE}(I,I)\big):=\hat{S}\overline{C}_{\CE}(I,I)$ is the representing complete cdga of the dgla $\Sigma\overline{C}_{\CE}(I,I)$, cf. \cite{Laz}. We obtain the following result which specializes to Theorem \ref{classs1} when $A$ is a representing cdga of an $L_\infty$ algebra $U$:
\begin{theorem}\label{defo}
The functor $A\mapsto \defo_I(A)$ associating to a complete cdga $A$ the set of equivalence classes of deformations of an $L_\infty$ algebra $I$ over $A$ is represented by the complete cdga $C_{\CE}\big(\Sigma\overline{C}_{\CE}(I,I)\big)$, the CE complex of the Gerstenhaber dgla $\Sigma\overline{C}_{\CE}(I,I)$.
\end{theorem}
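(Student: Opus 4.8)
The plan is to assemble the chain of natural bijections indicated just above the statement into a proof, by unwinding the definition of $\defo_I$ and then invoking the standard representability of the Maurer-Cartan moduli functor. First I would fix a complete cdga $A$ and rewrite $\defo_I(A)$ in purely Maurer-Cartan terms. By definition a deformation is an element $m^A\in\MC(\Der^A(A\otimes\hat{S}\Sigma^{-1}I^*))$ with $p_*(m^A)=m$, taken modulo the action of $G^A$. The key identification is that conditions (1) and (2) defining $\Der^A$ ($A$-linearity and vanishing modulo $A_+$), together with the requirement $p_*(m^A)=m$ and the absence of a curvature term, single out exactly $A_+\otimes\overline{\Der}(\hat{S}\Sigma^{-1}I^*)=A_+\otimes\Sigma\overline{C}_{\CE}(I,I)$ as the pronilpotent dgla controlling the perturbation of $m$. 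Under this identification the differential on $\Der^A$ induced by $m$ becomes the twisted differential $d+[m,-]$ on the truncated CE complex, the commutator bracket matches the Gerstenhaber bracket, and the group $G^A$ becomes the gauge group attached to the degree-zero part $[A_+\otimes\Sigma\overline{C}_{\CE}(I,I)]_0$. This yields $\defo_I(A)\cong\MCmod(\Sigma\overline{C}_{\CE}(I,I),A)$, consistent with the bijection $\defo_I\cong\Ext^s_{L_\infty}(-,I)$ recorded above.

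Next I would invoke the representability of the Maurer-Cartan moduli functor established in \cite{Laz}: for a dgla (or $L_\infty$ algebra) $\g$ with representing complete cdga $C_{\CE}(\g)$, the set $\MCmod(\g,A)$ is naturally bijective with $[C_{\CE}(\g),A]$, the homotopy classes of maps of complete cdgas. Applying this to $\g=\Sigma\overline{C}_{\CE}(I,I)$, whose representing cdga is $C_{\CE}(\Sigma\overline{C}_{\CE}(I,I))=\hat{S}\overline{C}_{\CE}(I,I)$, gives $\MCmod(\Sigma\overline{C}_{\CE}(I,I),A)\cong[C_{\CE}(\Sigma\overline{C}_{\CE}(I,I)),A]$. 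Composing the two bijections produces $\defo_I(A)\cong[C_{\CE}(\Sigma\overline{C}_{\CE}(I,I)),A]$, which is exactly the assertion that $\defo_I$ is represented by $C_{\CE}(\Sigma\overline{C}_{\CE}(I,I))$.

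To finish I would check naturality in $A$: a map $A\to A'$ of complete cdgas induces the evident base change on deformations, on Maurer-Cartan moduli, and on $[C_{\CE}(\Sigma\overline{C}_{\CE}(I,I)),-]$, and the two bijections above are visibly compatible with it. The promised specialization to Theorem \ref{classs1} when $A=\hat{S}\Sigma^{-1}U^*$ is the representing cdga of an $L_\infty$ algebra $U$ then drops out, since in that case $[C_{\CE}(\Sigma\overline{C}_{\CE}(I,I)),\hat{S}\Sigma^{-1}U^*]\cong[U,\Sigma\overline{C}_{\CE}(I,I)]_{L_\infty}$ by the cofibrancy and adjunction properties of representing cdgas used throughout.

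I expect the main obstacle to be the first step: making the identification of $\Der^A(A\otimes\hat{S}\Sigma^{-1}I^*)$ and its gauge group $G^A$ with $A_+\otimes\Sigma\overline{C}_{\CE}(I,I)$ and its gauge group fully precise. In particular I must verify that the $G^A$-orbits correspond exactly to gauge-equivalence classes of Maurer-Cartan elements, and that the constraint $p_*(m^A)=m$ together with the non-curved (section) condition really lands the perturbation in the \emph{truncated} complex $\overline{C}_{\CE}(I,I)$ rather than the full $C_{\CE}(I,I)$. Everything else is a formal consequence of results already available in \cite{Laz} and of the identifications made earlier in the paper.
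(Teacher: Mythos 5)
Your proposal follows essentially the same route as the paper: the theorem is stated there with no separate proof because the preceding discussion already records exactly your chain of bijections, $\defo_I(A)\cong\MCmod\bigl(\Sigma\overline{C}_{\CE}(I,I),A\bigr)\cong\bigl[C_{\CE}\bigl(\Sigma\overline{C}_{\CE}(I,I)\bigr),A\bigr]$, with the second step quoted from \cite{Laz}. The only difference is that you spell out the identification of $\Der^A(A\otimes\hat{S}\Sigma^{-1}I^*)$ and $G^A$ with the pronilpotent dgla $A_+\otimes\Sigma\overline{C}_{\CE}(I,I)$ and its gauge group, which the paper dismisses as ``clear''; your instinct to treat this as the step requiring verification is sound and does not change the argument.
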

\noproof
\begin{rem}\
\begin{enumerate}
\item
Later on we will employ the notation $\Def_V$ for the deformation functor governed by an $L_\infty$ algebra $V$. With this convention we will have $\defo_I\cong\Def_{\overline{C}(I,I)}$.
\item
We see, that given an $L_\infty$ algebra $I$ there exists its universal deformation with base $C_{\CE}\big(\Sigma\overline{C}_{\CE}(I,I)\big):=\hat{S}\overline{C}_{\CE}(I,I)$. This universal deformation can be identified with the $L_\infty$ algebra $\Sigma C_{\CE}[2](I,I)$ introduced in Remark \ref{higherCE} whose underlying space is $I\oplus\Sigma\overline{C}_{\CE}(I,I)$.
\end{enumerate}
\end{rem}

\section{Rational models for classifying spaces}
In this section we apply the theory of $L_\infty$ extensions to constructing rational homotopy models for classifying spaces of fibrations. We will use standard results from rational homotopy theory, particularly the equivalence between homotopy categories of rational nilpotent spaces, of cdgas and dglas (more precisely, between certain subcategories inside them), cf. \cite{Q',BG, N}.  Let $F$ be
a connected nilpotent CW complex which is rational and rationally of finite type, $\Aut(F)$ be the monoid of homotopy self-equivalences of $F$ and $\Baut F$ be the classifying space of $\Aut(F)$. It will be convenient for us to choose a model for $\Aut(F)$ that is a topological group (which could be done since $\Aut(X)$ is a group-like monoid); of course the final formulations will be homotopy invariant and so will not depend on the choice of a model.  Recall that $\Baut(F)$ is the base of the universal fibration with fiber $F$. The fundamental group of $\Baut(F)$ is the group $\Haut(F)$ of self-homotopy equivalences of $F$ up to homotopy and it is typically non-nilpotent. To obtain a satisfactory algebraic model one should, therefore, pass to the simply-connected cover $\Baut(F)\langle 1\rangle$ (which is weakly equivalent to the classifying space of the submonoid of $\Aut(B)$ consisting of self-homotopy equivalences homotopic to the identity). The space $\Baut(F)$ is then recovered as the homotopy quotient of $\Baut(F)\langle 1\rangle$ by the group $\Haut(F)$.

We refer to the monograph \cite{Rud} for the the classification theory of fibrations, including its variants for sectioned fibrations and rooted
fibrations.  Recall from op.cit. that a rooted $F$-fibration is a (Hurewicz)  fibration $p:X\to Y$ over a pointed space $Y=(Y,y_0)$ together with a fixed homotopy equivalence $i_p:p^{-1}(y_0)\simeq F$. Two rooted $F$-fibrations $p:X\to Y,q:Z\to Y$ over the same pointed space $Y=(Y,y_0)$ are called \emph{equivalent} if there exists a homotopy equivalence  $f:X\to Z$ over $Y$ such that $i_{q}\circ f=i_p$. Since any map can be replaced by an equivalent fibration the notion of a rooted equivalence could be defined on the level of homotopy categories, that is to say that two homotopy fiber sequences $F\to X\to Y$ and $F\to Z\to Y$ are equivalent if there exists the following homotopy commutative diagram.
\[
\xymatrix{F\ar@{=}[d]\ar[r]&X\ar_{\simeq}[d]\ar[r]&Y\ar@{=}[d]\\
F\ar[r]&Z\ar[r]&F}
\]
We will denote by $\Fib(Y,F)$ the set of equivalence classes of rooted fibration over $Y$ with fiber $F$ (or, equivalently, the set of equivalence classes of homotopy fiber sequences of the form $F\to X\to Y$ as above. The functor $Y\to\Fib(Y,F)$ is known to be represented in the homotopy category of pointed spaces by $\Baut(F)$, \cite{Rud}.

We denote the Sullivan model of $F$ by $A(F)$ and the Lie-Quillen model of $F$ by $L(F):=\L(A(X)_+)$.
The model of $\Baut(F)$ will be obtained in terms of the CE cohomology of $L(F)$ or, equivalently, in terms of Harrison cohomology of $A(F)$.

Recall that earlier in the paper we have given the CE cohomology the traditional cohomological grading, and we retain this convention despite the present interpretation
as a Lie-Quillen model of a classifying space (which naturally has \emph{homological} grading).  The same remark applies to the Harrison complexes.

Since the complexes $C_{\CE}(L(F),L(F))$ and $C_{\Harr}(A(F),A(F))$ contain elements of arbitrary integer grading, we need to truncate them. The connected covers of dglas $\Sigma C_{\CE}(L(F),L(F))$ and $\Sigma C_{\Harr}(A(F),A(F))$ do not have elements in negative degrees but, since their degree zero parts usually form non-nilpotent Lie algebras, we need to go further and take the simply-connected covers
$\Sigma C_{\CE}(L(F),L(F))\langle 1\rangle$ and $\Sigma C_{\Harr}(A(F),A(F))\langle 1\rangle$. Then the following result holds.
\begin{theorem}\label{mainth} Let $F$ be a rational nilpotent CW complex having rationally finite type. Let $\Aut(F)$ and $\Aut_*(F)$ be the monoid of self-homotopy equivalences of $F$ and the monoid of basepoint-preserving self-homotopy equivalences of $F$ respectively. Let $A(F)$ and $L(F)$ be the Sullivan and Lie-Quillen models of $F$ respectively.
\begin{enumerate}
\item
The dglas $\Sigma C_{\CE}(L(F),L(F))\langle 1\rangle$ and $\Sigma C_{\Harr}(A(F),A(F))\langle 1\rangle$ are both Lie-Quillen models for $\Baut(F)\langle 1\rangle$.
\item
The dglas $\Sigma \overline{C}_{\CE}(L(F),L(F))\langle 1\rangle$ and $\Sigma \overline{C}_{\Harr}(A(F),A(F))\langle 1\rangle$ are both Lie-Quillen models for $\Baut_*(F)\langle 1\rangle$.
\item
Suppose that, in addition, $F$ either has a finite Postnikov tower or it is rationally equivalent to a finite CW complex.
\begin{enumerate}
\item
The group $\pi_1\Baut(F)\cong \Haut(F)$ is a group of $\mathbb Q$-points of an affine algebraic group over $\mathbb Q$ whose Lie algebra
is $H_{\CE}^1(L(F),L(F))\cong H_{\Harr}^1(A(F),A(F))$. The action of $\pi_1\Baut(F)$ on $\pi_n\Baut(F), n>1$  corresponds to the
adjoint action of the Lie algebra $H^1_{\CE}(L(F),L(F))$ on
$H^{1-n}_{\CE}(L(F),L(F))$ or, equivalently, to the adjoint action of
the Lie algebra $H^1_{\Harr}(A(F),A(F))$ on $H^{1-n}_{\Harr}(A(F),A(F))$.
\item
The group $\pi_1\Baut_*(F)\cong \Haut_*(F))$ is a group of $\mathbb Q$-points of an affine algebraic group over $\mathbb Q$ whose Lie algebra is $\overline{H}_{\CE}^1(L(F),L(F))\cong \overline{H}_{\Harr}^1(A(F),A(F))$. The action of $\pi_1\Baut_*(F)$ on $\pi_n\Baut_*(F), n>1$  corresponds to the adjoint action of the Lie algebra $\overline{H}^1_{\CE}(L(F),L(F))$ on $\overline{H}^{1-n}{\CE}(L(F),L(F))$ or, equivalently, to the adjoint action of the Lie algebra $\overline{H}^1_{\Harr}(A(F),A(F))$ on $\overline{H}^{1-n}_{\Harr}(A(F),A(F))$.
\end{enumerate}
\end{enumerate}
\end{theorem}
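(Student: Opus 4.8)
The plan is to prove parts (1) and (2) by matching two representability statements: the topological classification of fibrations by the Dold--Lashof construction, and the algebraic classification of $L_\infty$ extensions established in Theorems \ref{class1} and \ref{classs1}. The bridge between them is the rational homotopy dictionary together with Proposition \ref{fibseq}, which identifies homotopy fiber sequences of $L_\infty$ algebras with $L_\infty$ extensions. Throughout I fix $I:=L(F)$, so that the algebraic fiber of every extension in sight is $L(F)$. Part (3) is then a separate analysis of the low-degree homotopy and of the action of the non-nilpotent fundamental group, for which the principal external input is \cite{BL}.

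For part (1) I would restrict attention to simply-connected rational spaces $Y$ of finite type and assemble the chain of natural bijections
\[
[Y,\Baut(F)\langle 1\rangle]_{*}\cong\Fib(Y,F)\cong\Ext_{L_\infty}(L(Y),L(F))\cong[L(Y),\Sigma C_{\CE}(L(F),L(F))]_{L_\infty}.
\]
The first isomorphism is the representability of $\Fib(-,F)$ by $\Baut(F)$ from \cite{Rud}, together with the fact that a map from a simply-connected space lifts uniquely to the universal cover $\Baut(F)\langle 1\rangle$. The second is the heart of the dictionary: a rooted fibration $F\to X\to Y$ rationalizes to a homotopy fiber sequence $L(F)\to L(X)\to L(Y)$, which by Proposition \ref{fibseq} is an $L_\infty$ extension, and one checks that rooted equivalence corresponds exactly to the relation defining $\Ext_{L_\infty}$ (identity on both $F$ and $Y$) rather than to free equivalence. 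The third isomorphism is Theorem \ref{class1}. Using Proposition \ref{conn} to replace the target by its simply-connected cover, the composite is naturally isomorphic to $[L(Y),L(\Baut(F)\langle 1\rangle)]_{L_\infty}$; since simply-connected finite-type test objects $L(Y)$ detect weak equivalences of dglas, Yoneda forces $\Sigma C_{\CE}(L(F),L(F))\langle 1\rangle\simeq L(\Baut(F)\langle 1\rangle)$. Part (2) follows by the same argument with \emph{sectioned} fibrations, which are classified by $\Baut_*(F)$, whose Lie models are \emph{split} extensions, so that Theorem \ref{classs1} replaces $\Sigma C_{\CE}$ by $\Sigma\overline{C}_{\CE}$. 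In both parts the passage from the CE to the Harrison description is immediate from Theorem \ref{CEH}, which also guarantees independence of the chosen models.

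For part (3) the finiteness hypothesis is what makes $\Haut(F)$ the group of $\mathbb{Q}$-points of an affine algebraic group; this, together with the identification of its Lie algebra with $H^1_{\Harr}(A(F),A(F))$, is the content of \cite{BL}, and Theorem \ref{CEH}(4) supplies the isomorphism $H^1_{\Harr}(A(F),A(F))\cong H^1_{\CE}(L(F),L(F))$. The higher homotopy groups are then read off from the models of parts (1) and (2): since $\pi_n(\Baut(F)\langle 1\rangle)\cong\pi_n\Baut(F)$ for $n>1$, the Lie--Quillen model computes these as its homology, giving $\pi_n\Baut(F)\cong H^{1-n}_{\CE}(L(F),L(F))$ in the grading conventions of the introduction. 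It remains to identify the monodromy action: the degree-zero cohomology $H^1_{\CE}(L(F),L(F))$ sits inside the Gerstenhaber dgla $\Sigma C_{\CE}(L(F),L(F))$, and its adjoint bracket action on the higher cohomology should be exactly the tangent action of $\Haut(F)$ on $\pi_{>1}$. The pointed case is identical with $\overline{C}_{\CE}$ in place of $C_{\CE}$.

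The step I expect to be the main obstacle is the identification of the $\pi_1$-action in part (3). Parts (1) and (2) live entirely in the simply-connected world, where the equivalence of homotopy categories applies verbatim, but $\Haut(F)$ is genuinely non-nilpotent and lies outside the nilpotent rational homotopy machinery. One must therefore argue on the universal fibration $F\to\Baut_*(F)\to\Baut(F)$ directly, recognising its rationalization as the universal $L_\infty$ extension of Theorem \ref{univext}, and match the holonomy of the fibration against the bracket of the degree-zero part of $\Sigma C_{\CE}(L(F),L(F))$ acting on the homology of the fiber. Verifying that these two actions \emph{coincide}, rather than being merely abstractly isomorphic, is the delicate point, and is where both the algebraic-group structure of \cite{BL} and Gottlieb's description \cite{Got} of the total space are needed. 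A secondary technical point in parts (1) and (2) is the careful matching of the topological equivalence relation on fibrations with the algebraic one on extensions, and the verification that finite-type test objects suffice for the Yoneda argument.
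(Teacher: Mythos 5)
Your treatment of parts (1) and (2) is essentially the paper's own argument: both sides are recognised as represented functors on the simply-connected homotopy category (via Proposition \ref{fibseq}, Theorems \ref{class1} and \ref{classs1}, Proposition \ref{conn} and the Quillen equivalence between simply-connected dglas and simply-connected rational spaces), and Yoneda identifies the representing objects; the Harrison versions follow from Theorem \ref{CEH}. The point you flag about matching rooted equivalence of fibrations with equivalence (rather than free equivalence) of extensions is indeed the reason $\Ext_{L_\infty}$ rather than $\widetilde{\Ext}_{L_\infty}$ appears, and is handled the same way in the paper.

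The genuine gap is in part (3). You correctly assemble the external inputs (the affine algebraic group structure on $\Haut(F)$ and the identification of its Lie algebra with $H^1_{\Harr}(A(F),A(F))$ from \cite{BL}, the computation of $\pi_n$ for $n>1$ from parts (1)--(2), and Theorem \ref{CEH} to pass between Harrison and CE), but the identification of the $\pi_1$-action is exactly the step you defer, and the route you sketch for it (holonomy of the universal fibration, Gottlieb's theorem, Theorem \ref{univext}) is not how the paper closes it and would itself require a nontrivial argument. The paper's mechanism stays inside the representability framework of parts (1)--(2): choosing a topological-group model of $\Aut(F)$, an element $\alpha\in\Haut(F)$ gives the conjugation endomorphism $f_\alpha:\beta\mapsto\alpha\beta\alpha^{-1}$ and hence a self-map of $\Baut(F)$ whose effect on $\pi_*\Baut(F)$ is the adjoint action; delooping the evaluation fibration $\Aut_*(F)\to\Aut(F)\to F$ shows that, as a natural transformation of $\Fib(-,F)$, this self-map leaves each fibration unchanged and twists the root $i_p:p^{-1}(y_0)\simeq F$ by $\alpha$. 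On the algebraic side one replaces $L(F)$ by a minimal model $I$, identifies $\Haut(F)$ with curved $L_\infty$ automorphisms of $I$ modulo those homotopic to the identity, and observes that twisting the root corresponds to the conjugation action of $\Aut(\hat{S}\Sigma^{-1}I^*)$ on $\Sigma C_{\CE}(I,I)=\Der(\hat{S}\Sigma^{-1}I^*)$; the tangent action of the Lie algebra $H^1_{\CE}(I,I)$, identified in \cite{BL}, is then the commutator bracket. Without some such explicit matching of the two actions at the level of the classifying functors, rather than an appeal to an abstract isomorphism, part (3) is not established.
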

\begin{proof}
Note, first of all, that the various statements of the theorem come in pairs: one involving the CE complexes, the other the Harrison complexes. These statements are all pairwise equivalent by virtue of Theorem \ref{CEH} and we will give proofs for the CE versions only. Additionally, every statement has a pointed and an unpointed version; we will explain the latter in detail and indicate how the arguments should be modified for the former.

We start with (1). Let $V$ be a cofibrant simply-connected dgla  and consider the functor $V\mapsto\Ext_{L_\infty}(V,L(F))$.  Note that this functor is homotopy invariant in the sense that replacing $V$ by a weakly equivalent $V^\prime$ results in a natural bijection $\Ext_{L_\infty}(V,L(F))\cong \Ext_{L_\infty}(V^\prime,L(F))$. Therefore this functor lifts to the homotopy category of simply-connected dglas. Recall that the category of simply-connected dglas and that of simply-connected spaces are Quillen equivalent; denote by $Q(\g)$ the space corresponding to a dgla $\g$. Recall that by Proposition \ref{fibseq} an $L_\infty$ extension of dglas represents a homotopy fiber sequence of dglas.

Since the functor $Q$ takes a homotopy fiber sequence of dglas into a homotopy fiber sequence of pointed spaces and since the notion of equivalence of $L_\infty$ extensions is the same as the homotopy equivalence of the corresponding homotopy fiber sequences, we conclude that there is a natural bijection
\[
\Ext_{L_\infty}(V,L(F))\cong\Fib(Q(V),QL(F))\cong\Fib(Q(V),F).
\]
Now the functor $V\mapsto\Ext_{L_\infty}(V,L(F))$ is represented in the homotopy category of simply-connected dglas by the dgla $C_{\CE}(L(F),L(F))\langle 1\rangle$. This is because
\begin{align*}\Ext_{L_\infty}(V,L(F))&\cong [V,\Sigma C_{\CE}(L(F),L(F))]_{L_\infty}\\
&\cong [V,\Sigma C_{\CE}(L(F),L(F))\langle 1\rangle]_{L_\infty}
\end{align*}
where the last bijection is implied by Proposition \ref{conn}. Similarly the functor $Y\to\Fib(Y,F)$ is represented by $\Baut(F)$ and so we have the following bijections of sets of homotopy classes of \emph{pointed} maps:
\begin{align*}\Fib(Q(V),F)&\cong [Q(V),\Baut(F)]_*\\
&\cong [Q(V),\Baut(F)\langle 1\rangle]_*
\end{align*}
where the second bijection holds because $Q(V)$ is a simply-connected space. Since $Q$ is an equivalence of homotopy categories, it follows that the spaces $\Baut(F)\langle 1\rangle$ and $Q[\Sigma C_{\CE}(L(F),L(F))\langle 1\rangle]$ are homotopy equivalent, i.e. that $\Sigma C_{\CE}(L(F),L(F))\langle 1\rangle$ is a Lie-Quillen model for $\Baut(F)\langle 1\rangle$. This finishes the proof of (1).

The proof of (2) is similar, after one takes into account that $\Baut_*(F)$ classifies rooted fibrations having fiber $F$ and a \emph{section} and that $\Sigma \overline{C}_{\CE}(V,V)$ likewise classifies \emph{split} $L_\infty$ extensions with kernel $V$, cf. Theorem \ref{classs1}.

Let us now turn to part (3a), particularly its CE version. The finiteness assumptions on $F$ ensures that $\Haut(F)$ is the group of $\mathbb Q$-points of an affine algebraic group over $\mathbb Q$, see \cite{Sul, BL}. Moreover, the Lie algebra of this algebraic group has been identified in \cite{BL} in terms of the Harrison cohomology. The grading convention in op. cit differs with our present one  by a shift; after taking this into account we obtain the desired identification of the Lie algebra of $\Haut(F)$ in terms of the Harrison, (and thus, CE) cohomology.

Let $\alpha\in \pi_1\Baut(F)=\Haut(F)$; we choose its representative in $\Aut(F)$ and denote it by the same symbol. We interpret the adjoint action of $\alpha$ on $\pi_{n-1}\Aut(F)\cong \pi_n\Baut(F), n>1$ as follows. Consider the map $f_\alpha:\Aut(F)\to\Aut(F)$ given by $f_\alpha:\beta\to\alpha\beta\alpha^{-1}$ where $\beta\in\Aut(F)$ (here we use the fact that $\Aut(F)$ is a topological group, in particular that it possesses strict inverses).
Note that the map $f_\alpha$ is a topological group endomorphism and so it induces a self-map of the classifying space $\Baut(F)$, which we also denote by $f_\alpha$ by abuse of notation. This self-map depends on the choice of $\alpha$ within its homotopy class and homotopic choices give rise to homotopic self-maps of $\Baut(F)$. The homotopy class of $f_\alpha$ is, therefore, well-defined. It is clear that the induced map on the homotopy groups: $\pi_n\Baut(F)\to \pi_n\Baut(F)$ is just the adjoint action of $\alpha$ in the Whitehead Lie algebra $\pi_*\Baut(F)$.

Since $\Baut(F)$ is a classifying space for rooted fibrations with fiber $F$, the datum of a self-map of $\Baut(F)$ up to homotopy is equivalent to a natural transformation of the functor $\Fib(-,F)$. Let us identify this transformation.

Consider the commutative diagram of spaces:
\[
\xymatrix{
\Aut_*(F)\ar[d]\ar[r]&\Aut(F)\ar^{f_\alpha}[d]\ar[r]&F\ar^\alpha[d]\\
\Aut_*(F)\ar[r]&\Aut(F)\ar[r]&F
}
\]
Here the horizontal lines are the homotopy fiber sequences of the evaluation fibration $\Aut(F)\to F$ and the unmarked self-map of $\Aut_*(F)$ is induced by $f_\alpha$. Since $\Aut_*(F)\to\Aut(F)$ is a monoid map, the sequence can be delooped giving rise to the commutative diagram of homotopy fiber sequences
\[
\xymatrix{
F\ar^\alpha[d]\ar[r]&\Baut_*(F)\ar[d]\ar[r]&\Baut(F)\ar^{f_\alpha}[d]\\
F\ar[r]&\Baut_*(F)\ar[r]&\Baut(F)
}
\]
We will view the lower row of the above diagram as the universal rooted fibration over $\Baut(F)$ with fiber $F$ over the base point of $\Baut(F)$. It follows that the rooted fibration induced from the universal one by the map $f_\alpha$ is the \emph{same} universal fibration, but where the fixed homotopy equivalence of the central fiber with $F$ has been \emph{twisted} by a self-map $\alpha$ of $F$. This is how $\alpha\in \Haut(F)$ acts on the functor $\Fib(-,F)$: it leaves the  fibration unchanged but twists the given homotopy equivalence of the central fiber with $F$ by $\alpha$.

Next, let us consider the action of $\pi_1\Baut(F)=\Haut(F)$ on the functor $\Ext_{L_\infty}(-,L(F))$. First of all, replace $L(F)$ by an $L_\infty$-quasi-isomorphic minimal $L_\infty$ algebra $I$. Note that its representing complete cdga $\hat{S}\Sigma^{-1} I^*$ is nothing but a minimal Sullivan model for $F$; moreover the completion is unnecessary in this case since the grading considerations give $\hat{S}\Sigma^{-1} I^*\cong{S}\Sigma^{-1} I^*$. For an $L_\infty$ algebra $U$, the set of $L_\infty$ extensions $I\to V\to U$ is in 1-1 correspondence with the set $\MC(\Sigma C_{\CE}(I,I),(\hat{S}\Sigma^{-1}U^*)_+)$ where $\hat{S}\Sigma^{-1}U^*$ is the representing complete cdga of $U$. The group $\Haut(F)$ can be identified with the group of dg automorphisms of ${S}\Sigma^{-1} I^*$ (also known as \emph{curved} $L_\infty$ automorphisms of $I$) modulo automorphisms homotopic to zero.

Let again $\alpha\in \Haut(F)$ and let $\alpha$ be represented by a curved $L_\infty$ automorphism of $I$; it will be denoted by the same letter.
Thus, $\alpha\in\Aut(\hat{S}\Sigma^{-1}I^*)$. Then $\alpha$ determines an automorphism of the functor $\Ext_{L_\infty}(-,I)\cong[-,\Sigma C_{\CE}(I,I)]_{L_\infty}$ through its action by conjugation on $\Sigma C_{\CE}(I,I)=\Der(\hat{S}\Sigma^{-1}I^*)$. Replacing $\alpha$ by a homotopic curved $L_\infty$ automorphism results
in a homotopic self-map of $\Sigma C_{\CE}(I,I)$. There is, therefore, a map of monoids $\Haut(F)\to [\Sigma C_{\CE}(I,I),\Sigma C_{\CE}(I,I)]_{L_\infty}$ or, in other words, an action (up to homotopy) of the group $\Haut(F)$ on $\Sigma C_{\CE}(I,I)\cong \Sigma C_{\CE}(L(F),L(F))$ and thus, also on $\Sigma C_{\CE}(L(F),L(F))\langle 1\rangle$.

The associated (honest) group action of $\Haut(F)$ on $H_{\CE}(L(F),L(F))\langle 1\rangle$ coincides with the topological action of $\pi_1(\Baut(F))$ on $\pi_n(\Baut(F)), n>1$ through the identification of $\Haut(F)$ with $\pi_1(\Baut(F))$ and of $\pi_n(\Baut(F))$ with $H_{\CE}^{1-n}(L(F),L(F))$.

The tangent Lie algebra of the quotient of the group of all $L_\infty$ automorphisms of $I$ by those homotopic to zero is precisely $H_{\CE}^1(I,I)$; this fact (more precisely, the equivalent statement involving Harrison cohomology) was established in \cite{BL}. It follows that the tangent action of $H_{\CE}^1(I,I)\cong H_{\CE}^1(L(F),L(F))$ on  $\pi_n(\Baut(F))\cong H_{\CE}^{1-n}(L(F),L(F))$ is via the commutator bracket in $H_{\CE}^{*}(L(F),L(F))$ as claimed. This finishes the proof of (3a).

The proof of (3b) is similar. We first interpret the action of an element in $\Haut_*(F)\cong\pi_1\Baut_*(F)$ in terms of natural transformations of the functor $\Fib_*(-,F)$ associating to a topological space $Y$ the set of equivalence classes of \emph{sectioned} fibrations over $Y$ with
fiber $F$. We next observe that the group $\Haut_*(F)$ is the set of $\mathbb Q$ points of an algebraic group and the corresponding Lie algebra
is $\overline{H}^1_{\CE}(L(F),L(F))$. The group of $L_\infty$ automorphisms of $I$ acts by conjugations on $\Sigma \overline{C}_{\CE}(I,I)\cong \Sigma\overline{C}_{\CE}(L(F,L(F))$ and this determines an action up to homotopy of the group $\Haut_*(F)$  on $\Sigma\overline{C}_{\CE}(L(F),L(F))$.

The associated (honest) action of $\Haut_*(F)$ on $\overline{H}^{1-n}_{\CE}(L(F),L(F))$ coincide with the action of $\pi_1\Baut_*(F)$ on $\pi_n\Baut_*(F)$ for $n>1$ and the tangent action of $ H_{\CE}^1(L(F),L(F))$ on  $\cong H_{\CE}^{1-n}(L(F),L(F))$ is via the commutator bracket in $H_{\CE}^{*}(L(F),L(F))$.
\end{proof}
It follows that the Whitehead Lie algebras of $\Baut(F)$ and of $\Baut_*(F)$ can be computed solely in terms of standard derived functors.
\begin{cor}\
\begin{enumerate}
\item
The Whitehead Lie algebra $\pi_*\Baut(F)\langle 1\rangle$ is isomorphic to either of the graded Lie algebras $\oplus_{n=2}^{-\infty}H_{\CE}^n(L(F),L(F))\cong \oplus_{n=2}^{-\infty}H_{\Harr}^n(A(F),A(F))$.
\item
The Whitehead Lie algebra $\pi_*\Baut_*(F)\langle 1\rangle$ is isomorphic to either of the graded Lie algebras $\oplus_{n=2}^{-\infty}\overline{H}_{\CE}^n(L(F),L(F))\cong \oplus_{n=2}^{-\infty}\overline{H}_{\Harr}^n(A(F),A(F))$.
\end{enumerate}
\end{cor}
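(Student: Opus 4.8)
The plan is to read the corollary off Theorem \ref{mainth} by invoking the fundamental principle of rational homotopy theory that, for a simply-connected rational space $X$ admitting a Lie-Quillen model $L$, the Whitehead Lie algebra $\pi_*X$ is recovered, as a graded Lie algebra, from the homology of $L$, the bracket being induced by the Lie bracket of $L$. The first step is therefore to apply Theorem \ref{mainth}(1), which identifies $\Sigma C_{\CE}(L(F),L(F))\langle 1\rangle$ as a Lie-Quillen model for the simply-connected space $\Baut(F)\langle 1\rangle$; consequently $\pi_*\Baut(F)\langle 1\rangle$ is isomorphic as a graded Lie algebra to the homology of this dgla.

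The second step is to evaluate that homology. Since homology commutes with the suspension functor and, above the cut-off degree, is unaffected by passage to the $1$-connected cover, the homology of $\Sigma C_{\CE}(L(F),L(F))\langle 1\rangle$ is simply the CE cohomology $H^*_{\CE}(L(F),L(F))$ restricted to those degrees that survive the cover, namely $\bigoplus_{n=2}^{-\infty} H^n_{\CE}(L(F),L(F))$. The graded Lie algebra structure on the target is the one descending from the commutator (Gerstenhaber) bracket carried by $\Sigma C_{\CE}(L(F),L(F))$; because the Lie-Quillen correspondence is a bracket-preserving equivalence, this matches the Whitehead bracket automatically and no independent check is required.

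For the Harrison reformulation I would observe that Theorem \ref{mainth}(1) equally exhibits $\Sigma C_{\Harr}(A(F),A(F))\langle 1\rangle$ as a Lie-Quillen model for the same space, so its homology $\bigoplus_{n=2}^{-\infty} H^n_{\Harr}(A(F),A(F))$ is canonically isomorphic to the CE answer; alternatively this isomorphism is furnished directly by the $L_\infty$ quasi-isomorphisms of Theorem \ref{CEH}(3)--(4), which preserve homology and brackets. Part (2) is entirely parallel: one replaces Theorem \ref{mainth}(1) by Theorem \ref{mainth}(2) and the complexes $C_{\CE},C_{\Harr}$ by their truncated versions $\overline{C}_{\CE},\overline{C}_{\Harr}$, so that $\Sigma\overline{C}_{\CE}(L(F),L(F))\langle 1\rangle$ models $\Baut_*(F)\langle 1\rangle$ and yields $\bigoplus_{n=2}^{-\infty}\overline{H}^n_{\CE}(L(F),L(F))$.

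The one point demanding care is the degree bookkeeping in the second step: one must track the suspension convention, the conversion between the homological grading natural to homotopy groups and the cohomological grading retained for the CE and Harrison complexes, and exactly which degrees are discarded by $\langle 1\rangle$, in order to confirm that the surviving range is precisely $n$ running from $2$ down to $-\infty$. This is the same bookkeeping underlying the degreewise identifications recorded in the proof of Theorem \ref{mainth}, and once the conventions of the introduction are applied it is routine.
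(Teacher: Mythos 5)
Your argument is exactly the one the paper intends: the corollary is stated there without proof as an immediate consequence of Theorem \ref{mainth}, read off via the standard fact that the Whitehead Lie algebra of a simply-connected rational space is the homology of any Lie-Quillen model equipped with the induced bracket, together with Theorem \ref{CEH} for the CE/Harrison comparison. You also correctly isolate the only delicate point, namely the bookkeeping between the homological grading of homotopy groups, the suspension, and the cohomological grading retained for the CE and Harrison complexes.
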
\noproof
Furthermore, since a Sullivan model (at least for simply-connected spaces) is obtained from its Lie-Quillen model by applying the functor $C_{\CE}$ we get the following result.
\begin{cor}
Either of the cdgas $C_{\CE}\big(C_{\CE}(L(F,L(F))\langle 1\rangle \big)$ or $C_{\CE}\big(C_{\Harr}(A(F),A(F))\langle 1\rangle\big)$ is a Sullivan model (non-minimal in general) for the space $\Baut(F)\langle 1\rangle$.
\end{cor}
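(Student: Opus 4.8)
The plan is to read the statement off directly from Theorem \ref{mainth}(1) combined with the principle, recalled in the sentence immediately preceding the corollary, that for a simply-connected space a Sullivan model is computed by applying the CE cochain functor to any Lie-Quillen model.

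First I would note that, by Theorem \ref{mainth}(1)---and, for the equivalence of the two displayed cdgas, by Theorem \ref{CEH}---the dglas $\Sigma C_{\CE}(L(F),L(F))\langle 1\rangle$ and $\Sigma C_{\Harr}(A(F),A(F))\langle 1\rangle$ are Lie-Quillen models for $\Baut(F)\langle 1\rangle$. This space is simply-connected by construction of the $\langle 1\rangle$-cover, so it lies in the range where the triangle of Quillen equivalences relating spaces, complete cdgas and dglas applies on the nose. Under this correspondence the functor $C_{\CE}$---equivalently $\C$, since for a dgla $\g$ the one-argument CE complex $C_{\CE}(\g)$ is by definition the representing cdga $\C(\g)=\hat S\Sigma^{-1}\g^*$---carries a Lie-Quillen model of a space to a cdga model of the same space. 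Applying this to $\g=\Sigma C_{\CE}(L(F),L(F))\langle 1\rangle$ and to its Harrison counterpart yields that the two cdgas in the statement are cdga models for $\Baut(F)\langle 1\rangle$. Since $\C(\g)$ is free as a graded-commutative algebra, hence cofibrant in the Hinich model structure, each such model is a bona fide Sullivan algebra; the finite-type hypothesis on $F$ guarantees the completion is harmless, the completed and uncompleted symmetric algebras agreeing degreewise as in the proof of Theorem \ref{mainth}.

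The only genuine point to watch, and hence the main obstacle, is the grading bookkeeping: $C_{\CE}$ is written cohomologically throughout the paper but is interpreted homologically as a Lie-Quillen model, so one must pass through the convention $V_i=V^{-i}$ before forming the outer $C_{\CE}$, and verify that $\Sigma C_{\CE}(L(F),L(F))\langle 1\rangle$ indeed lies in positive homological degrees, so that its CE cochain algebra falls in the cohomological Sullivan range $\ge 2$. Finally, since $\C$ applied to a non-minimal dgla is not minimal in general, the resulting Sullivan model need not be minimal, accounting for the parenthetical qualification in the statement.
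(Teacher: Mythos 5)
Your proposal is correct and follows exactly the route the paper intends: the corollary is stated without proof, being an immediate consequence of Theorem \ref{mainth}(1) (together with Theorem \ref{CEH} for the Harrison variant) and the remark in the preceding sentence that applying $C_{\CE}$ to a Lie--Quillen model of a simply-connected space yields a Sullivan model. Your additional observations on gradings, cofibrancy and non-minimality are sensible elaborations rather than a different argument.
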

\noproof

Standard examples of explicit models for $\Baut(F)$ include the cases when $F$ is a complex projective space or a wedge of spheres, cf. \cite{Sul, SS}. The following example is less well-known.
\begin{example}
Consider the wedge of $2N$ spheres of the form $X=\bigvee_{1}^{2N}S^n$ where $n>1$; we also assume that $n$ is odd. Then $X$ has a Lie-Quillen model of the form ${\mathbb L}\langle p_1, q_1,\ldots, p_N,q_N\rangle$, the free Lie algebra on $2N$ generators $p_i, q_i$ with $|p_i|=|q_i|=n-1$.  We could build a Poincar\'e duality space $M$ out of $X$ by attaching a $2n$-cell corresponding to the element $w=[p_1,q_1]+\ldots+[p_N,q_N]\in\pi_n(X)$. Then $M$ is a coformal space whose Lie-Quillen model is $L(M):={\mathbb L}\langle p_1, q_1,\ldots, p_N,q_N\rangle/\omega$.

Let us forget for a moment the internal grading on $L(M)$ and view it as an ungraded Lie algebra. Then its universal enveloping algebra $U\big(L(M)\big)$ is a preprojective algebra corresponding to a quiver having one vertex and $n$ loops. The Hochschild cohomology of preprojective algebras was computed in \cite{CEG} where it was proved, in particular,  that it vanishes in degrees greater than $2$. It follows that $H^n_{\CE}(L(M),L(M))=0$ for $n>2$. Further, $H^2_{\CE}\big(L(M),L(M)\big)$ splits off $H^2_{\Hoch}\big(U(L(M)),U(L(M))\big)$ and it follows from the explicit form of $H^2_{\Hoch}\big(U(L(M)),U(L(M))\big)$ computed in op. cit. that $H^2_{\CE}(L(M),L(M))$ is a 2n-dimensional vector space concentrated in homological degree $-n$; denote it by $H$. We have therefore an isomorphism $\overline{H}^*_{\CE}(L(M),L(M))\cong \Der(L(M),L(M))\ltimes H$. The Lie algebra of derivations of $L(M)$ is just the Lie subalgebra $\g_N$ of derivations of the free Lie algebra ${\mathbb L}\langle p_1, q_1,\ldots, p_N,q_N\rangle$ which preserve the ideal generated by $w$\footnote{The author would like to thank A. Berglund for pointing this out to him.}. Note that $\g_N$ contains a Lie subalgebra, consisting of derivations \emph{vanishing on $w$}; the latter Lie algebra was introduced by Kontsevich, \cite{Kon}; the CE complex of its stable version computes, essentially, the cohomology of groups of outer automorphisms of free groups.

Remembering that $L(M)$ had a grading, we see that $\g_N\ltimes H$ also has one and thus, we can take its simply-connected cover $(\g_N\ltimes H)\langle 1\rangle\cong \g_N\langle 1\rangle$. This is a Lie-Quillen model of $\Baut_*(M)\langle 1\rangle$. In particular, this space is coformal\footnote{These observations were also made by A. Berglund and I. Madsen, \cite{MB}, using different methods.}.

It is also curious to note that the $S^1$-equivariant homology free loop space on $X$ supports a string bracket which is essentially the Kontsevich noncommutative Poisson bracket \cite{Kon} on $\Der\big(U(L(M))\big)$; this was observed in \cite{Lazs}.
\end{example}
\section{Algebraic structure on the deformation complexes of $L_\infty$ algebras}
Suppose that $U, V$ are two $L_\infty$ algebras and $f:V\to U$ is an $L_\infty$ map represented by a map of complete cdgas $\hat{S}\Sigma U^*\to \hat{S}\Sigma^{-1} V^*$. Consider the $L_\infty$ algebra $\hat{S}\Sigma^{-1}  V^*\otimes U$, the tensor product of $U$ and the complete cdga $\hat{S}\Sigma^{-1}  V^*$ representing $U$; the map $f$ can be viewed as an MC element in this $L_\infty$ algebra. Then, as explained in \cite{Laz}, the CE complex of $V$ with coefficients in $U$ is the $L_\infty$ algebra $(\hat{S}\Sigma^{-1}  V^*\otimes U)^f$, the \emph{twisting} of $\hat{S}\Sigma^{-1}  V^*\otimes U$ by the MC element $f$. Incidentally, this is the $L_\infty$ algebra `governing' the deformations of the $L_\infty$ map $f$.

Now let $V=U$. As we know, the complex $C_{\CE}(V,V)$ admits a different description; namely $C_{\CE}(V,V)=\Sigma^{-1}\Der(\hat{S}\Sigma V^*)$. The commutator of derivations endows $\Sigma C_{\CE}(V,V)$ with the Gerstenhaber bracket, making it into a dgla. As was explained earlier in the paper, this dgla governs the deformations of the $L_\infty$ structure on $V$.

We see, therefore, that the complex $C_{\CE}(V,V)$ possesses two structures: the structure of an odd dgla with the Gerstenhaber bracket and that of an $L_\infty$ algebra. The latter structure is somewhat less familiar, even in the situation when $V$ is an ordinary Lie algebra (although it was considered in \cite{Mar, LM}). Note that in that case the $L_\infty$ structure on $C_{\CE}(V,V)$ reduces to that of a dgla; we will refer to the corresponding bracket as the cup-bracket. It could be described in the traditional notation as follows. Let $g\in C_{\CE}^n(V,V)$ and $h\in C_{\CE}^m(V,V)$ be two CE cochains of the Lie algebra $V$ viewed as skew-symmetric multilinear maps:
$g:\Lambda^n(V)\to V; h:\Lambda^m(V)\to V$. Then we can form their  cup-bracket $[g\cup h]:\Lambda^{n+m}(V)\to V$:
\[
 [g\cup h](v_1,\ldots, v_{m+n})=\sum_{\sigma\in S_{n+m}}\frac{1}{(n+m)!}(-1)^{\sigma}[g(v_{\sigma(1)}, \ldots, v_{\sigma(n)}),h(v_{\sigma(n+1)}, \ldots, v_{\sigma(n+m)})].
\]
In the $L_\infty$ context similar formulas can also be written down but it is much simpler to work in the dual framework, using representing
complete cdgas instead.

If the representing complete cdga of $V$ is a Sullivan model of a topological space $X$, then the Gerstenhaber bracket in $\Sigma C_{\CE}(V,V)$ corresponds to
the Whitehead Lie bracket on homotopy groups of $\Baut(X)$ whereas the cup-bracket corresponds to the Whitehead Lie bracket on homotopy groups of $\Aut(X)$. Since the latter is an H-space, the latter bracket is trivial.

This suggests that the cup-bracket on $C_{\CE}(V,V)$ for an arbitrary $L_\infty$ algebra $V$ should be trivial, at least up to homotopy. We will see that deformation theory provides a short and non-computational proof of this statement. A similar argument can also be used for other type of cohomology, i.e. Harrison or Hochschild but we will restrict ourselves to the CE case. We note that this argument, in the Hochschild context, could be found in Kontsevich's 1994 lectures on deformation theory \cite{Konlect}.
\subsection{Unobstructed deformation functors} The material in this subsection has been known to experts since Kontsevich's lectures on deformation theory \cite{Konlect}, however as far as we know it has not been properly documented and we feel that it is useful to give a brief outline here.

\begin{defi} Let $(V,m)$ be an $L_\infty$ algebra and $A$ be a non-unital complete cdga. Then an element $\xi\in (A\otimes\Sigma V)_0$ is
\emph{Maurer-Cartan} (MC for short) if $(d_A\otimes\id)(\xi)+(\id\otimes d_V)+\sum_{i=1}^\infty
\frac{1}{i!}m^A_i(\xi^{\otimes i})=0$. The set of Maurer-Cartan elements in $A\otimes\Sigma V$ will be
denoted by $\MC(V,A)$.

Two MC elements $\xi,\eta\in\MC(V,A)$ are called \emph{homotopic} if there exists
an MC-element $h\in(V, A[z,dz])$ such that $h|_{z=0}=\xi$ and $h|_{z=1}=\eta$.
The set of homotopy classes of MC elements in $\MC(V,A)$ will be denoted by $\MCmod(V,A)$.
\end{defi}

An MC element $\xi\in\MC(V,A)$ can be viewed as a \emph{deformation} of the zero MC element with base $A$ (or $\tilde{A}$ if one wishes to work with unital bases). The functor $A\mapsto\MCmod(V,A)$ is also called the (extended) \emph{deformation functor} associated with an $L_\infty$ algebra $V$ and having $A$ as the base. To reinforce this point of view we will use the notation $\Def_V$ for this functor. It follows from Hinich's results \cite{H} that this $\Def_V$ is representable in the homotopy category of complete cdgas by $\hat{S}\Sigma^{-1}V^*$, the representing cdga of $V$ (cf. \cite{Laz} which explains this).
\begin{defi}\
\begin{enumerate}
\item
A (non-unital) complete cdga $A$ is called \emph{infinitesimal} if its multiplication is zero.
\item
A deformation with an infinitesimal base is called an \emph{infinitesimal deformation}.
\item
The deformation functor associated with an $L_\infty$ algebra $V$ is called \emph{unobstructed} if for any surjective map of complete cdgas $f:A\to B$, both having zero differential, such that the kernel of $f$ is infinitesimal, the corresponding map $\Def_V(A)\to\Def_V(B)$ is surjective.
\end{enumerate}
\end{defi}
We now have the following almost obvious result.
\begin{theorem}
Let $V$ be an $L_\infty$ algebra. Then the functor $\Def_V$ is unobstructed if and only if $V$ is $L_\infty$ quasi-isomorphic to an abelian $L_\infty$ algebra, i.e. an $L_\infty$ algebra whose $L_\infty$ products are all zero. In such a situation we will call $V$ \emph{homotopy abelian}.
\end{theorem}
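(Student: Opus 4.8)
The plan is to reduce both implications to a minimal model and then, for the forcing direction, to exhibit the first nontrivial $L_\infty$ product as the primary obstruction to lifting deformations across a single square-zero extension; this is the (CE incarnation of the) argument of \cite{Konlect}.

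\emph{Reduction.} First I would record that $\Def_V$ is a homotopy invariant of $V$: since it is representable in the homotopy category by the cofibrant cdga $\hat{S}\Sigma^{-1}V^*$ and $L_\infty$ quasi-isomorphic algebras have weakly equivalent representing cdgas (cf. \cite{Laz,H}), an $L_\infty$ quasi-isomorphism $V\simeq V'$ induces a natural isomorphism $\Def_V\cong\Def_{V'}$. Hence I may replace $V$ by its minimal model $W=H(V)$ with the transferred $L_\infty$ structure (available since $\ground$ has characteristic zero), for which $m_1=0$, so that ``abelian'' means exactly that all higher products $m_k$ vanish.

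\emph{Homotopy abelian implies unobstructed.} Here $W$ carries zero differential and zero products. For any base $A$ with zero differential the Maurer--Cartan equation in $A\otimes\Sigma W$ is vacuous, so $\Def_W(A)=(A\otimes\Sigma W)_0$; a surjection $A\to B$ tensored with $\Sigma W$ stays surjective in degree $0$, giving surjectivity of $\Def_W(A)\to\Def_W(B)$. \emph{Unobstructed implies homotopy abelian} I would prove by contraposition. Assume $W$ is not abelian and let $n\geq 2$ be least with $m_n\neq 0$. Choose finitely many homogeneous $e_\alpha\in\Sigma W$ with $m_n(e_{\alpha_1},\dots,e_{\alpha_n})\neq 0$ for some tuple, let $T$ be the span of dual variables $t_\alpha$ with $|t_\alpha|=-|e_\alpha|$, and set $A=\bar{S}(T)/\bar{S}^{\geq n+1}(T)$ and $B=\bar{S}(T)/\bar{S}^{\geq n}(T)$, both with zero differential. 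The projection $A\to B$ is surjective with kernel $\bar{S}^n(T)$, which is square-zero because $2n\geq n+1$, hence infinitesimal; this is precisely an extension allowed in the definition of unobstructedness. The element $\xi=\sum_\alpha t_\alpha\otimes e_\alpha$ is Maurer--Cartan over $B$, since every product lands in $\bar{S}^{\geq n}(T)=0$, while for its tautological lift $\tilde\xi=\sum_\alpha t_\alpha\otimes e_\alpha$ over $A$ one computes $\sum_k\frac{1}{k!}m_k^A(\tilde\xi^{\otimes k})=\frac{1}{n!}\sum t_{\alpha_1}\cdots t_{\alpha_n}\otimes m_n(e_{\alpha_1},\dots,e_{\alpha_n})$, the terms with $k<n$ vanishing by minimality of $n$ and those with $k>n$ vanishing in $A$.

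\emph{The crux and the main obstacle.} It remains to see that this element of $\bar{S}^n(T)\otimes\Sigma W$ is the genuine primary obstruction to lifting the \emph{class} of $\xi$, not merely the on-the-nose element. I would check that the obstruction is lift-independent: replacing $\tilde\xi$ by $\tilde\xi+\kappa$ with $\kappa\in\bar{S}^n(T)\otimes\Sigma W$ alters the correction terms only by products landing in $\bar{S}^{\geq n+1}(T)=0$ (and $\kappa^2=0$), so $\sum_k\frac1{k!}m_k^A((\tilde\xi+\kappa)^{\otimes k})$ is unchanged. Since all differentials vanish, the relevant obstruction group is $\bar{S}^n(T)\otimes\Sigma W$ itself, and the displayed element is nonzero exactly because $m_n\neq 0$: keeping the variables $t_\alpha$ separate recovers the full symmetric tensor $m_n$, so no polarization cancellation occurs in characteristic zero. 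Invoking the standard obstruction calculus for square-zero extensions of Maurer--Cartan functors, this shows $[\xi]$ is not in the image of $\Def_W(A)\to\Def_W(B)$, contradicting unobstructedness. The genuinely delicate step is this last one --- passing from the elementary computation to a statement about homotopy classes, i.e. confirming that lift-independence together with non-vanishing of the class really obstructs the \emph{homotopy class} $[\xi]$; everything else is bookkeeping with truncated symmetric algebras.
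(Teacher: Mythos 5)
Your proof is correct in substance but takes a genuinely different route from the paper, most notably in the converse direction. For ``homotopy abelian implies unobstructed'' the paper argues at the level of representing objects: after reducing to the abelian case the representing cdga is $(\hat{S}\Sigma^{-1}V^*)_+$ with zero differential, a free object, so any map out of it lifts along a surjection; your direct observation that the Maurer--Cartan equation becomes vacuous over a base with zero differential is an equivalent, more hands-on version of the same reduction. For the converse the two arguments really diverge. The paper does not argue by contraposition: it lifts the \emph{universal} infinitesimal deformation $((\hat{S}\Sigma^{-1}V^*)_+,m_V)\to\Sigma^{-1}V^*$ up the entire pronilpotent tower to the formal power series ring and reads off an isomorphism of the representing cdga with one carrying zero differential, killing all higher products simultaneously. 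You instead isolate the first nonvanishing product $m_n$ of a minimal model and realize it as the primary obstruction for a single, explicitly chosen square-zero extension $\bar{S}(T)/\bar{S}^{\geq n+1}\to\bar{S}(T)/\bar{S}^{\geq n}$. Your version is more quantitative (it identifies exactly which extension detects which product, and the polarization argument showing the obstruction class is literally $m_n|_{S^nE}$ is correct in characteristic zero); the paper's version is global and sidesteps obstruction calculus entirely, at the cost of an implicit inverse-limit argument for lifting through the whole tower.

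The one step you should not leave to an appeal to ``standard obstruction calculus'' is the passage from on-the-nose non-liftability of $\xi$ to non-liftability of its class in $\Def_W(B)=\MCmod(W,B)$. The clean fix is available with what you already have: by the Schlessinger--Stasheff theorem homotopy of MC elements over the nilpotent base $B$ is gauge equivalence, and any gauge transformation over $B$ lifts to one over $A$ (the surjection of underlying graded algebras splits), so if some $\xi'$ gauge equivalent to $\xi$ admitted an MC lift $\eta$ over $A$, transporting $\eta$ by the lifted inverse gauge transformation would produce an MC lift of $\xi$ itself --- contradicting your computation that every set-theoretic lift of $\xi$ has curvature equal to the fixed nonzero element of $\bar{S}^n(T)\otimes\Sigma W$. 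With that sentence added your argument is complete.
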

\begin{proof}
Let $V$ be homotopy abelian. Since the functor $\Def_V$ is homotopy invariant in $V$ we might as well assume that $V$ is abelian, i.e. that its representing cdga is $(\hat{S}\Sigma^{-1}V^*,0)$, with \emph{zero differential}. The problem of lifting a deformation over $B$ to a deformation over $A$ is the problem of constructing the dotted arrow in the homotopy category of complete (non-unital) cdgas:
\[
\xymatrix{
&A\ar[d]\\
(\hat{S}\Sigma^{-1}V^*)_+\ar[r]\ar@{-->}[ur]&B
}
\]
which is always possible since $(\hat{S}\Sigma^{-1}V^*)_+$ is a free object in complete cdgas. Conversely, suppose that a lifting is always possible. Then clearly for any infinitesimal algebra $B$ any deformation over $B$ lifts to a deformation over the formal power series ring $\ground[[B]]$. Without loss of generality we can assume that the $L_\infty$ algebra $V$ is minimal, i.e. that the differential on $V$ is zero. Consider the universal infinitesimal deformation with base $\Sigma^{-1}V^*$ given by the quotient map $((\hat{S}\Sigma^{-1}V^*)_+,m_V)\to \Sigma^{-1}V^*$. The lift of this deformation to the formal power series ring determines an isomorphism of the representing complete cdga of $V$ with $((\hat{S}\Sigma^{-1}V^*)_+,0)$, i.e. a formal power series ring with \emph{zero differential}. Thus, $V$ is homotopy abelian.
\end{proof}
It is, of course, very rare for a deformation problem to be unobstructed; however there are important cases when it happens. For example, if a dgla $V$ supports a dg BV-algebra structure of a special kind \cite{BK}, then the associated deformation problem is unobstructed; in particular deformations of Calabi-Yau manifolds are unobstructed. Similarly, deformations of a symplectic structure on a symplectic manifold $M$ are unobstructed since the governing dgla is the de Rham complex of $M$ with the trivial bracket. We will now discuss another example of this kind and its consequences for the CE cohomology.
\subsection{Deformations of automorphisms of $L_\infty$ algebras} Let $U$ and $V$ be two $L_\infty$ algebras and $f:V\to U$ be an $L_\infty$ map. Such a map can be viewed as an MC element in the $L_\infty$ algebra $\overline{L}:=(\hat{S}\Sigma^{-1}U^*)_+\otimes V$. Consider the deformation functor $\Def_{\overline{L}^f}$ associated with $\overline{L}$ twisted by $f$. One can say that deformations of $f$ are governed by the $L_\infty$ algebra $\overline{L}^f$. Consider also $L:=\hat{S}\Sigma^{-1}U^*\otimes V$ and the $L_\infty$ algebra $L^f$. The algebra $L^f$ governs deformations of $f$ within the class of \emph{curved} $L_\infty$ maps $V\to U$.

Now assume that $U=V$ and that $f$ is the identity morphism. In that case it is known that $L^f\cong\overline{C}_{\CE}(V,V)$ and $L^f\cong{C}_{\CE}(V,V)$, cf. \cite{Laz}.
\begin{theorem}\label{weakD}
The $L_\infty$ algebras $C_{\CE}(V,V)$ and $\overline{C}_{\CE}(V,V)$ are both homotopy abelian.
\end{theorem}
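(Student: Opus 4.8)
The plan is to deduce the statement from the preceding theorem, which characterises homotopy abelian $L_\infty$ algebras as precisely those whose deformation functor is unobstructed. It therefore suffices to show that the functors $\Def_{C_{\CE}(V,V)}$ and $\Def_{\overline{C}_{\CE}(V,V)}$ are unobstructed. Here I would exploit the identifications $C_{\CE}(V,V)\cong L^{\id}$ and $\overline{C}_{\CE}(V,V)\cong\overline{L}^{\id}$ recalled above: these $L_\infty$ algebras govern deformations of the identity self-map $\id\colon V\to V$, taken respectively in the curved and in the augmentation-preserving (honest) sense. Writing $R:=\hat{S}\Sigma^{-1}V^*$ for the representing cdga of $V$ and fixing a base $A$, a point of $\Def_{C_{\CE}(V,V)}(A)$ is thus the homotopy class of an $A$-linear cdga endomorphism $\Phi$ of $R\otimes\tilde{A}$ reducing to the identity modulo $A$, the honest case being the same with $\Phi$ required to preserve the augmentation.

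The decisive step is to pass to exponential coordinates. Since the linear part of such a $\Phi$ is the identity modulo the pro-nilpotent ideal $A$, the endomorphism $\Phi$ is pro-unipotent and has a well-defined logarithm: $\Phi=\exp(\delta)$ for a unique degree-zero derivation $\delta$ of $R\otimes\tilde{A}$ valued in $A$ (lying in $\Der(R)\otimes A$ in the curved case and in $\overline{\Der}(R)\otimes A$ in the honest case, the difference being exactly the constant, or curving, term). Conjugation by $\Phi$ sends $m_V$ to $m_V+[\delta,m_V]+\tfrac12[\delta,[\delta,m_V]]+\cdots$, so $\Phi$ commutes with $m_V$ --- equivalently, the Maurer--Cartan equation cutting out the deformation holds --- if and only if $\delta$ is a cocycle, $[m_V,\delta]=0$. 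In these coordinates the a priori nonlinear Maurer--Cartan condition has \emph{linearised}: deformations of the identity are in natural bijection with closed degree-zero derivations valued in $A$, that is, with the pro-unipotent group $\exp(Z^0\Der(R)\otimes A)$ and its augmented analogue.

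Finally I would check that the lifting property defining unobstructedness is now immediate. For a surjection $A\to B$ of complete cdgas with \emph{vanishing differentials}, the cocycle condition $[m_V,\delta]=0$ involves only the internal differential $m_V$ of $R$ and not that of the base; hence closedness is a condition on the fixed complex $\Der(R)$ tensored with the coefficients, and a closed derivation over $B$ lifts to one over $A$ simply by lifting its coefficients along the surjection $A\to B$. Concretely $Z^0\Der(R)\otimes A\to Z^0\Der(R)\otimes B$ is surjective because tensoring the surjection $A\to B$ of vector spaces with the fixed space $Z^0\Der(R)$ preserves surjectivity. Exponentiating shows that $\Def(A)\to\Def(B)$ is surjective already on representatives, a fortiori on homotopy classes, so both functors are unobstructed and the theorem follows. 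The point requiring the most care is the second paragraph: identifying deformations of the identity with the pro-unipotent group of closed derivations, keeping the exponential/logarithm bookkeeping honest, correctly separating the curved from the honest case through the constant term, and verifying that the Maurer--Cartan equation reduces exactly to $m_V$-closedness. Once this linearisation is established, the vanishing of the obstruction is a formal consequence of the triviality of the base differentials --- and, as the hypothesis of square-zero kernel is never used, the argument in fact gives surjectivity for all such surjections.
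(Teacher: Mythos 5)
Your proposal is correct and follows essentially the same route as the paper: both identify $\Def_{C_{\CE}(V,V)}$ with deformations of the identity automorphism and use the exponential correspondence between (pro-nilpotent) derivations and unipotent automorphisms to linearise the Maurer--Cartan problem, whence unobstructedness. The paper is terser --- it only treats an infinitesimal base $A$ and exponentiates the resulting derivation to an automorphism over $\hat{S}(A)$ --- whereas you verify the lifting property directly for arbitrary surjections with zero differential, but the mechanism is identical.
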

\begin{proof}
We will start with the $L_\infty$ algebra $L^{\id}=C_{\CE}(V,V)$ and consider the functor $\Def_{L^{\id}}$. Let $A$ be an infinitesimal complete algebra; then a deformation $\xi$ associated to $L^{\id}$ with base $A$ can be identified with a continuous map of cdgas $\hat{S}\Sigma^{-1}V^*\to \tilde{A}\otimes\hat{S}\Sigma^{-1}V^*$ whose composition with the projections $\tilde{A}\otimes\hat{S}\Sigma^{-1}V^*\to \hat{S}\Sigma^{-1}V^*$ is the identity map. Then $\xi$ is nothing but a (continuous) $A$-linear derivation of $\tilde{A}\otimes\hat{S}\Sigma^{-1}V^*$ whose image lies in ${A}\otimes\hat{S}\Sigma^{-1}V^*$. Furthermore, $\xi$ can also be viewed as an $\hat{S}(A)$-linear derivation of $\tilde{A}\otimes\hat{S}\Sigma^{-1}V^*$ whose image lies in the ideal generated by $A$. It follows that $e^\xi$ is an $\hat{S}(A)$-linear automorphism of $\tilde{A}\otimes\hat{S}\Sigma^{-1}V^*$ whose linear part is $\xi$. Thus, the deformation functor associated with the $L_\infty$ algebra $C_{\CE}(V,V)$ is unobstructed and the latter is homotopy abelian.

The proof for $\overline{C}_{\CE}(V,V)$ is similar except $\hat{S}\Sigma^{-1}V^*$ has to be replaced with its non-unital version $(\hat{S}\Sigma^{-1}V^*)_+$.
\end{proof}
\begin{rem}
The proof above can be summed up by saying that deformations of automorphisms are unobstructed since every infinitesimal automorphism can be exponentiated to a formal automorphism. Note that the ensuing $L_\infty$ isomorphism with an abelian $L_\infty$ algebra is by no means trivial. For example, if $V$ is an ordinary Lie algebra the CE complex $C_{\CE}(V,V)$ is isomorphic, as a graded Lie algebra to the tensor product $\hat{S}\Sigma^{-1}V^*\otimes V$. This dgla becomes homotopy abelian only after twisting by the canonical MC element (which, in this case, does not change the graded Lie algebra structure).
\end{rem}
\section{Future directions}
Our treatment of rational homotopy theory as deformation theory gives rise to a number of natural open questions and possible applications. Following the suggestion of the referee, we gathered them in the concluding section of the paper.

\subsection{Deformation theory}Theorem \ref{defo} can colloquially be rephrased as saying that the dgla $\Sigma\overline{C}_{\CE}(I,I)$ supplied with the Gerstenhaber bracket \emph{governs} deformations of the $L_\infty$ algebra $I$. This slogan is, of course, well-known.

In this connection it would be interesting to make a comparison between our results and Hinich's, \cite{H'}. He obtains a similar result (in the generality of operadic algebras) but \emph{only} under the assumption that both the algebra being deformed and the base are cohomologically non-positively graded whereas we don't have such restrictions. Let $V$ be an (operadic) algebra and $A$ be the base of a deformation, i.e. a complete cdga (or a dg artinian algebra). Hinich assumes that $V$ is cofibrant and also, that deformations of $V$ are cofibrant as $A$-algebras. It appears that it is the latter condition that leads to the discrepancy between our approaches. Presumably, there is a closed model category structure on $A$-algebras which takes into account the filtration on $A$ and such that, in particular, $A$-algebras having zero homology are not necessarily weakly equivalent to zero. It seems likely that Hinich's approach applied to this notion of deformation allows one to get rid of grading assumptions.

Next, let $I$ be an $A_\infty$ algebra. Then one can define the notion of a deformation of $I$ over a complete cdga $A$ as well as the equivalence of such deformations. Such a notion is essentially equivalent to Kajiura-Stasheff's \emph{open-closed homotopy algebra} (OCHA), \cite{KS}. One can also prove that the functor of deformations is represented by the complete cdga $C_{\CE}\big(\Sigma\overline{C}_{\Hoch}(I,I)\big)$ where $\Sigma\overline{C}_{\Hoch}(I,I)$ is the truncated Hochschild complex of $I$ with the Gerstenhaber bracket, cf. for example \cite{CL} concerning this notion. However, an interpretation of such a deformation as an extension is lacking, essentially because we still consider commutative (albeit dg and complete) bases. The relevant deformation theory, allowing noncommutative bases, was considered in the recent paper \cite{GS}. It seems likely that using the methods of op.cit. one could prove representability of the deformation functor in the appropriate homotopy category and  a similar classification result in the noncommutative situation.  It will also be interesting to extend this theory to algebras over other cofibrant operads (e.g. $C_\infty$ and $G_\infty$).
\subsection{Rational homotopy}
Let $F$ be as in Section 5. Theorem \ref{mainth} provides a nearly complete algebraic description of the classifying spaces of $\Aut(F)$ and $\Aut_*(F)$. More precisely,  the Harrison or Chevalley-Eilenberg models allow one to reconstruct universal covers of these spaces together with the action of the fundamental 
group on higher homotopy groups by conjugation. In order to reconstruct the full classifying space, say, of $\Aut(F)$, one has to know the corresponding action of $\pi_1\Baut(F)$ on the corresponding (Harrison or Chevalley) model of $\Baut(F)\langle1\rangle$. This action is, of course, not the adjoint action mentioned above because topologically $\pi_1\Baut(F)$ acts on $\Baut(F)\langle1\rangle$ without fixed points. This is, therefore, an \emph{affine} action and the natural question is to identify it, or the tangent action of the corresponding Lie algebra in terms of Chevalley-Eilenberg or Harrison complexes. 

Next, let $I$ be a minimal $L_\infty$ model of $F$ (so that the representing cdga of $I$ is a Sullivan minimal model of $F$). Since a minimal model of $I$ is is unique up to an isomorphism, the group of all (curved) $L_\infty$ automorphisms of a minimal model of $I$ has a homotopy invariant meaning, as well as does its adjoint action on the dgla $\Sigma C_{\CE}(I,I)$. Can one describe this group and its action in intrinsic algebro-topological terms and if so is this sufficient to reconstruct the (homotopy type of) $\Baut(F)$ or $\Baut_*(F)$?

Finally, suppose that $F$ is \emph{not} a nilpotent space. In this case there is still an $L_\infty$ minimal model of $F$ which is unique up to an isomorphism. It follows that the dgla $\Sigma C_{\CE}(I,I)$ is a homotopy invariant of $F$. Therefore, so is its simply-connected truncation $\Sigma C_{\CE}(I,I)\langle 1\rangle$. Then the simplicial set $F^\prime:=Q\left(\Sigma C_{\CE}(I,I)\langle 1\rangle\right)$ also depends on the homotopy type of $F$ only. Since $F$ is not nilpotent, we cannot conclude that $F^\prime$ is (rationally homotopy equivalent to) the universal cover of $F$. Can one still say anything about $F^\prime$ in terms of $F$? 
\subsection{Deligne conjecture type problems}
It is clear that the argument used in the proof of Theorem \ref{weakD}  can be applied to the deformation complexes of other homotopy algebras. For instance, if $V$ is an $A_\infty$ algebra then its Hochschild complex $C_{\Hoch}(V,V)$ has the structure of an $A_\infty$ algebra and one can similarly prove that its associated $L_\infty$ algebra is homotopy abelian, in particular, that the associative structure on the cohomology is commutative. Note that the $A_\infty$ structure on $C_{\Hoch}(V,V)$ is, in fact, equivalent to a $C_\infty$ algebra structure as part of a richer structure of a $G_\infty$ algebra implied by the Deligne conjecture. This is, of course, stronger than the statement that $C_{\Hoch}(V,V)$ is homotopy abelian as an $L_\infty$ algebra, thus our statement can be viewed as a weak version of the Deligne conjecture for $L_\infty$ algebras.
This makes one wonder if a stronger version of this conjecture is true for $L_\infty$ (or even for Lie) algebras. There is a notion of the operad of natural operations in the CE complex $C_{\CE}(V,V)$ of an $L_\infty$ algebra $V$ (indeed, such a notion can be defined in a much more general situation, including Hochschild and Harrison cohomology and many more, \cite{Mar}). The Gerstenhaber bracket and $L_\infty$ products are examples of natural operations; these generate a dg suboperad inside the dg operad of all natural operations in $C_{\CE}(V,V)$. A natural question is to find a presentation of this operad by generators and relations; that means effectively finding a compatibility condition between the $L_\infty$ operations and the Gerstenhaber bracket. A similar question could be asked about the Harrison complex  $C_{\Harr}(A,A)$ where $A$ is a commutative or, more generally, $C_\infty$ algebra. Theorem \ref{CEH} suggests that the answer for the commutative and Lie cases should be the same. Furthermore, from Theorem \ref{weakD} one could expect that that this suboperad is quasi-isomorphic to the operad of Lie algebras. This could be viewed as a version of the Deligne conjectire for $L_\infty$ algebras and we refer the reader to the above-mentioned paper by Markl where many more related conjectures are formulated.

\end{document}